\journal{{\tt arXiv.org}}
\definecolor{sepia}         {cmyk}{0   , 0.83, 1   , 0.70}
\definecolor{maroon}        {cmyk}{0   , 0.87, 0.68, 0.32}
\newcommand{\InsertFig}[3]{
  \begin{figure}[!htbp]
    \begin{center}
      \leavevmode
      #1
      \caption{#2}
      #3
    \end{center}
  \end{figure}
}
\newif\iflongpaper
\newcommand{\longpaper}[2]{\iflongpaper #1 \else #2 \fi}
\newtheorem{theorem}{Theorem}[section]
\newtheorem{definition}[theorem]{Definition}
\newtheorem{remark}[theorem]{Remark}
\newtheorem{proposition}[theorem]{Proposition}
\newtheorem{lemma}[theorem]{Lemma}
\newcommand{\floor}[1]{\ensuremath{\left\lfloor #1 \right\rfloor}}
\newcommand{\secref}[1]{Section~\ref{#1}}
\newcommand{\thmref}[1]{Theorem~\ref{#1}}
\newcommand{\lemref}[1]{Lemma~\ref{#1}}
\newcommand{\figref}[1]{Figure~\ref{#1}}
\newcommand{\tabref}[1]{Table~\ref{#1}}
\newcommand{\abs}[1]{\ensuremath{\left| #1 \right|}}
\newcommand{\eps}{\varepsilon}
\newcommand{\R}{\mathbb{R}}
\newcommand{\collconst}{{c}}
\def\xL{x_{\text{L}}}
\def\xR{x_{\text{R}}}
\def\psiL{\psi_{\text{L}}}
\def\psiR{\psi_{\text{R}}}
\def\tf{t_{\rm f}}
\def\psiInit{\psi_{t = 0}}
\def\etad{\eta_{\ast}}
\newcommand{\U}{\ensuremath{\mathbf{u}}}
\newcommand{\Uzero}{\ensuremath{\U_{t = 0}}}
\newcommand{\Uhat}{\ensuremath{\hat{\U}}}
\newcommand{\Ubar}{\ensuremath{\bar{\U}}}
\newcommand{\UcollR}{\ensuremath{\U_{\cC}}}
\newcommand{\psip}[1]{\psi^{\left( #1\right)}_+}
\newcommand{\psim}[1]{\psi^{\left( #1\right)}_-}
\newcommand{\RD}[2]{\ensuremath{\mathcal{R}_{#1}^{#2}}}
\newcommand{\RQ}[1]{\RD{#1}{\mathcal{Q}}}
\newcommand{\RQone}[1]{\left. \RQ{#1} \right|_{u_0 = 1}}
\newcommand{\RQsone}[1]{\left. \RQ{#1} \right|_{u_0 < 1}}
\newcommand{\source}{S}
\newcommand{\nmom}{\ensuremath{N}}
\newcommand{\nqmu}{\ensuremath{{n_\mathcal{Q}}}}
\newcommand{\nqs}{\ensuremath{{Q}}}
\newcommand{\ncells}{\ensuremath{{J}}}
\newcommand{\ntime}{\ensuremath{{N_t}}}
\newcommand{\order}{\ensuremath{k}}
\newcommand{\timelvl}{\ensuremath{n}}
\newcommand{\basisComp}{\ensuremath{b}}
\newcommand{\basis}{\ensuremath{ \mathbf{\basisComp}}}
\newcommand{\mbasisComp}{\ensuremath{p}}
\newcommand{\mbasis}{\ensuremath{ \mathbf{\mbasisComp}}}
\newcommand{\mmbasisComp}{\ensuremath{m}}
\newcommand{\mmbasis}{\ensuremath{ \mathbf{\mmbasisComp}}}
\def\RQb{\RQ{\basis}}
\def\RQm{\RQ{\mbasis}}
\def\RQmm{\RQ{\mmbasis}}
\def\RQbOne{\overline{\RQb}|_{u_0 = 1}}
\def\RQmOne{\overline{\RQm}|_{u_0 = 1}}
\def\RQmmOne{\overline{\RQmm}|_{u_0 = 1}}
\def\RQmmOneBig{\left. \overline{\RQmm}\right|_{u_0 = 1}}
\def\RQblOne{\overline{\RQb}|_{u_0 \le 1}}
\def\RQblOneBig{\left. \overline{\RQb}\right|_{u_0 \le 1}}
\newcommand{\MN}{\ensuremath{\text{M}_{\nmom}}}
\newcommand{\MMN}{\ensuremath{\text{MM}_{\nmom}}}
\newcommand{\bfM} { \mbox{\boldmath $M$} }
\def\alphahat{\hat{\bsalpha}}
\def\uIso{\bu_{\text{iso}}}
\newcommand{\collision}[1]{\ensuremath{\cC\left(#1\right)}}
\newcommand{\collisionop}{\ensuremath{\cC}}
\def\ansatz{\hat \psi}
\def\ansatzu{\ansatz_{\U}}
\DeclareMathOperator*{\argmin}{argmin}
\DeclareMathOperator{\interior}{int}
\DeclareMathOperator{\co}{co}
\numberwithin{equation}{section}
\newif\ifexternalizing 
     \definecolor{darkgreen}{RGB}{0, 128,0}
\title{A realizability-preserving discontinuous Galerkin scheme for 
entropy-based moment closures for linear kinetic equations in one space
dimension}
\author[ga]{Graham Alldredge}
\author[fs]{Florian Schneider}
\address[ga]{Department of Mathematics, RWTH Aachen University, Schinkelstr. 2,52062 Aachen, Germany, {\tt alldredge@mathcces.rwth-aachen.de}}
\address[fs]{Fachbereich Mathematik, TU Kaiserslautern, Erwin-Schr\"odinger-Str., 67663 Kaiserslautern, Germany, {\tt fschneid@mathematik.uni-kl.de}}
\begin{document}
\begin{abstract}
We implement a high-order numerical scheme for the entropy-based moment
closure, the so-called M$_{\nmom}$ model, for linear
kinetic equations in slab geometry.
A discontinuous Galerkin (DG) scheme in space along with a strong-stability
preserving Runge-Kutta time integrator is a natural choice to achieve a
third-order scheme, but so far, the challenge for such a scheme in this
context is the implementation of a linear scaling limiter when the numerical
solution leaves the set of realizable moments (that is, those moments
associated with a positive underlying distribution).
The difficulty for such a limiter lies in the computation of the intersection
of a ray with the set of realizable moments.
We avoid this computation by using quadrature to generate a convex polytope
which approximates this set.
The half-space representation of this polytope is used to compute an
approximation of the
required intersection straightforwardly, and
with this limiter in hand, the rest of the DG scheme is constructed
using standard techniques.
We consider the resulting numerical scheme on a new manufactured solution
and standard benchmark problems for both traditional M$_{\nmom}$ models
and the so-called mixed-moment models.
The manufactured solution allows us to observe the expected convergence
rates and explore the effects of the regularization in the optimization.
\end{abstract}
\begin{keyword}
radiation transport \sep moment models \sep realizability \sep
discontinuous Galerkin \sep high order
\MSC[2010] 35L40 \sep 35Q84 \sep 65M60
\end{keyword}
\noindent
\maketitle
\ifpdf
    \graphicspath{{Images/PNG/}{Images/PDF/}{Images/}}
\else
    \graphicspath{{Chapter4/Chapter4Figs/EPS/}{Chapter4/Chapter4Figs/}}
\fi


\section{Introduction}
Moment closures are a class of spectral methods used in the context of
kinetic transport equations.
An infinite set of moment equations is defined by taking velocity-
or phase-space averages with respect to some basis of the velocity space. A
reduced description of the kinetic density is then
achieved by truncating this hierarchy of equations at some finite order.
The remaining equations however inevitably require information from the
equations which were removed.
The specification of this information, the so-called moment closure problem, distinguishes different moment
methods.
In the context of linear radiative transport, the standard spectral method
is commonly referred to as the P$_{\nmom}$ closure \cite{Lewis-Miller-1984},
where $\nmom$ is the order of the highest-order moments in the model.
The P$_{\nmom}$ method is powerful and simple to implement, but does not
take into account the fact that the original function to be
approximated, the kinetic density, must be non-negative.
Thus P$_{\nmom}$ solutions can contain negative values for the local
densities of particles, rendering the solution physically meaningless.

Entropy-based moment closures, referred to as M$_{\nmom}$ models in the
context of radiative transport \cite{Min78,DubFeu99},
have all the properties one would desire in a moment method, namely
positivity of the underlying kinetic density,%
\footnote{
Positivity is actually not gained for every entropy-based moment closure
but is indeed a property of those models derived from important, physically
relevant entropies.
}
hyperbolicity of the closed system of equations,
and entropy dissipation \cite{Lev96}.
Practical implementation of these models has been traditionally considered
too expensive because they require the numerical solution of an
optimization problem at every point on the space-time grid, but recently
there has been renewed interest in the models due to their inherent
parallelizability \cite{Hauck2010}.
However, while their parallelizability goes a long way in making M$_{\nmom}$
models computationally competitive, in order to make these methods truly
competitive with more basic discretizations, the gains in efficiency that
come from higher-order methods will likely be necessary.
Here the issue of realizability becomes a stumbling block.

The property of positivity implies that the system of moment
equations only evolves on the set of so-called realizable moments.
Realizable moments are simply those moments associated with positive
densities, and the set of these moments forms a convex cone which is a
strict subset of all moment vectors.
This property, while indeed desirable since it is consistent with the
original kinetic density, can cause problems for numerical methods.
Standard high-order numerical solutions to the Euler equations, which indeed are an
entropy-based moment closure, have been observed to have negative
local densities and pressures \cite{Zhang2010}.
This is exactly loss of realizability.

A recently popular high-order method for hyperbolic systems is the
Runge-Kutta discontinuous Galerkin (RKDG) method
\cite{CockburnShuPk,Cockburn1989}.
An RKDG method for moment closures can handle the loss of realizability
through the use of a realizability (or ``positivity-preserving'') limiter
\cite{Zhang2010}, but so far these have been implemented for low-order
moment systems (that is $\nmom = 1$ or $2$) \cite{Olbrant2012}
because here one can rely on the
simplicity of the structure of the realizable set for low-order moments.
For higher-order moments, the realizable set has complex nonlinear
boundaries: when the velocity domain is one-dimensional, the realizable
set is characterized by the positive-definiteness of Hankel matrices
\cite{Shohat-Tamarkin-1943,CurFial91}; in higher dimensions, the realizable
set is not well-understood.
In \cite{ahot2013}, however, the authors noticed that a quadrature-based
approximation of the realizable set is a convex polytope.
With this simpler form, one can now actually generalize the realizability
limiters of \cite{Zhang2010,Olbrant2012} for moment systems of (in
principle) arbitrary moment order.
Furthermore, this approximation of the realizable set holds in any
dimension.

In this work we begin by reviewing our kinetic equation, its 
entropy-based moment closure, and the concept of realizability in
\secref{sec:momapprox}.
Then in \secref{sec:dg} we outline how we apply the Runge-Kutta
discontinuous Galerkin scheme to the moment equations.  Here the key
ingredients are a strong-stability preserving Runge-Kutta method, a
numerical optimization algorithm to compute the flux terms, a slope
limiter, a realizability-preserving property for the cell means, and the
realizability limiter.
In \secref{sec:SIM} we present numerical results using a manufactured
solution to perform a convergence test, as well as simulations of standard
benchmark problems.
Finally in \secref{sec:Conclusions}, we draw conclusions and suggest
directions for future work.

\section{A linear kinetic equation and moment closures}
\label{sec:momapprox}

We begin with the linear kinetic equation we will use to test our algorithm
and a brief introduction to entropy-based moment closures and the concept of
realizability.
More background can be found for example in \cite{Lewis-Miller-1984,
Lev96,Hauck2010} and references therein.

\subsection{A linear kinetic equation}

We consider the following one-dimensional linear kinetic equation for the 
kinetic density $\psi = \psi(t, x, \mu) \ge 0$ in slab geometry,
for time $t > 0$, spatial coordinate
$x \in X = (\xL, \xR) \subseteq \R$, and angle variable
$\mu \in [-1, 1]$:
\begin{align}
\label{eq:FokkerPlanck1D}
\partial_t\psi + \mu \partial_x  \psi + \sig{a} \psi = 
\sig{s}\collision{\psi} + \source,
\end{align}
where $\sig{a}$ are $\sig{s}$ are the absorption and scattering interaction 
coefficients, respectively, which throughout the paper we assume for 
simplicity to be constants,%
\footnote{
All results here can be generalized to spatially inhomogeneous interaction
coefficients.
}
and $\source$ a source.
The operator $\collisionop$ is a collision operator, which in this paper we
assume to be linear and have the form
\begin{equation}
 \collision{\psi} = \int_{-1}^1 T(\mu, \mu^\prime)
  \psi(t, x, \mu^\prime)~d\mu^\prime 
  - \int_{-1}^1 T(\mu^\prime, \mu) \psi(t, x, \mu)~d\mu^\prime.
\label{eq:collisionOperator}
\end{equation}
We assume that the kernel $T$ is strictly positive and normalized to 
$\int_{-1}^1 T(\mu^\prime, \mu) d\mu^\prime~\equiv~1$.  A typical example is 
isotropic scattering, where $T(\mu, \mu^\prime) \equiv 1/2$.

Equation \eqref{eq:FokkerPlanck1D} is supplemented by initial and boundary 
conditions:
\begin{subequations}
\label{eq:bc-ic}
\begin{align}
 \psi(t, \xL, \mu) &= \psiL(t, \mu) \,, & t &\geq 0 \,, & \mu &> 0\,, 
  \label{eq:bcL} \\
 \psi(t, \xR, \mu) &= \psiR(t, \mu) \,, & t &\geq 0 \,, & \mu &< 0\,, 
  \label{eq:bcR} \\
 \psi(0, x, \mu) &= \psiInit(x, \mu)  \,, & x &\in 
  (\xL,\xR) \,, & \mu &\in [-1,1] \,,
\end{align}
\end{subequations}
where $\psiL$, $\psiR$, and $\psiInit$ are given.

\subsection{Moment equations and entropy-based closures}

Moments are defined by angular averages against a set of basis functions.  
We use the following notation for angular integrals:
$$
 \Vint{\phi} = \int_{-1}^1 \phi(\mu) d\mu
$$
for any integrable function $\phi = \phi(\mu)$;
and therefore if we collect the basis functions into a vector $\basis = 
\basis(\mu) = (\basisComp_0(\mu), \basisComp_1(\mu), \ldots, 
\basisComp_{\nmom}(\mu))^T$, then the moments of a kinetic density
$\phi = \phi(\mu)$ are given by $\U = \Vint{\basis \phi}$.

A system of partial differential equations for moments $\U = \U(t, x)$
approximating the moments $\vint{\basis \psi}$ (for the $\psi$ which satisfies
\eqref{eq:FokkerPlanck1D}) can be obtained by multiplying
\eqref{eq:FokkerPlanck1D} by $\basis$, integrating over $\mu$, and closing the
resulting system of equations by replacing $\psi$ where necessary with an
\textit{ansatz} $\ansatzu$ which satisfies $\bu = \vint{\basis \ansatzu}$.
The resulting system has the form \cite{Lev96,Hauck2010}
\begin{equation}
 \partial_t \U + \partial_x \bff(\U) + \sig{a}\U = \sig{s} \br(\U) + 
  \Vint{\basis \source},
\label{eq:moment-closure}
\end{equation}
where
$$
 \bff(\U) := \Vint{\mu \basis \ansatz_{\U}} \quand
 \br(\U)  := \Vint{\basis \collision{\ansatz_{\U}}}.
$$
In an entropy-based closure (commonly referred in standard polynomial 
bases as the $\MN$ model), the ansatz is the solution to the constrained
optimization problem
\begin{equation}
 \ansatz_{\U} = \argmin\limits_\phi \left\{\Vint{\eta(\phi)}
 : \Vint{\basis \phi} = \U \right\},
\label{eq:primal}
\end{equation}
where the kinetic entropy density $\eta$ is strictly convex and
the minimum is simply taken over functions $\phi = \phi(\mu)$ such that 
$\Vint{\eta(\phi)}$ is well defined.
The optimization problem \eqref{eq:primal} is typically numerically solved
through its strictly convex finite-dimensional dual,
\begin{equation}
 \alphahat(\U) := \argmin_{\bsalpha \in \R^{\nmom + 1}} \Vint{\eta_*(\basis^T 
  \bsalpha)} - \U^T \bsalpha,
\label{eq:dual}
\end{equation}
where $\eta_*$ is the Legendre dual of $\eta$.
The first-order necessary conditions for the dual problem 
show that the solution to the primal problem \eqref{eq:primal} has the form
\begin{equation}
 \ansatz_{\U} = \etad' \left(\basis^T \alphahat(\U) \right)
\label{eq:psiME}
\end{equation}
where $\etad'$ is the derivative of $\eta_*$.

The entropy $\eta$ can be chosen according to the  physics being modeled.
As in \cite{Hauck2010} we use Maxwell-Boltzmann entropy%
\footnote{
Indeed in a linear setting such as ours, any convex entropy $\eta$ is 
dissipated by \eqref{eq:FokkerPlanck1D}, so we have some freedom.
We focus on the Maxwell-Boltzmann entropy because it is physically relevant for 
many problems, gives a positive ansatz $\ansatz_{\U}$, and also allows
us to explore 
some of the challenges of numerically simulating entropy-based moment closures.%
} %
\begin{align}
\label{eq:EntropyM}
 \eta(z) = z \log(z) - z.
\end{align}
For the Maxwell-Boltzmann entropy \eqref{eq:EntropyM}, $\etad(y) = \etad'(y)
= \exp(y)$.

In this paper we consider both the monomial moments, defined by the basis
$$
 \mbasis(\mu) := \left(1, \mu, \ldots, \mu^{\nmom} \right)^T
$$
and the so-called mixed-moments \cite{SchneiderAlldredge14}, which contain
the usual zeroth-order moment but half moments in the higher orders.
This is achieved using the basis functions
$$
\mmbasis(\mu) := \left( 1,
\mu_+,
\mu_-, 
\ldots , \mu^\nmom_+, \mu^\nmom_- 
\right)^T,
$$
where $\mu_+ = \max(\mu, 0)$ and $\mu_- = \min(\mu, 0)$.%
\footnote{
Notice that in the mixed-moment case, there are $2\nmom + 1$ basis functions 
instead of $\nmom + 1$ as in the monomial case.
However, for clarity of exposition, for most of the paper we will assume 
$\basis$ has $\nmom + 1$ components, though everything applies to the 
mixed-moment case as well.%
} %
Mixed-moment models, which we refer to as MM$_{\nmom}$, have been
introduced to address disadvantages like 
the zero net-flux-problem and unphysical shocks in full-moment models
\cite{Frank07,SchneiderAlldredge14}.

We close this section by quickly noting that the classical P$_{\nmom}$ 
approximation \cite{Lewis-Miller-1984} is an 
entropy-based moment closure by choosing the basis $\basis$ as the Legendre 
polynomials and using the entropy
$$
 \eta(z) = \frac12 z^2.
$$
This results in the ansatz
$$
 \ansatz_{\U} = \basis^T \bsalpha,
$$
which clearly is not necessarily positive.  Nonetheless the resulting moment 
system is linear, simple to compute, and for high values of $\nmom$ provides 
good baseline solutions to the original kinetic equation
\eqref{eq:FokkerPlanck1D}.

\subsection{Moment realizability}

Since the underlying kinetic density we are trying to approximate is
nonnegative, a 
moment vector only makes sense physically if it can be associated with a 
nonnegative density. In this case the moment vector is called 
\textit{realizable}.
Additionally, since the entropy ansatz has the form \eqref{eq:psiME}, in the 
Maxwell-Boltzmann case the optimization problem \eqref{eq:primal} only has a 
solution if the moment vector lies in the ansatz space
$$
 \cA := \left\{ \Vint{\basis \exp \left( \basis^T \bsalpha \right)} 
  : \bsalpha \in \bbR^{N + 1}  \right\}.
$$
In our case, where the domain of angular integration is bounded, the ansatz 
space $\cA$ is exactly equal to the set of realizable moment vectors 
\cite{Jun00}.
Therefore we can focus simply on realizable moments, so in this section we 
quickly review their characterization in the cases of exact and approximate 
integration.

\subsubsection{Classical theory}\hskip 1pt\\
\label{sec:realizability-thy}

\begin{definition}
The \emph{realizable set} \RD{\basis}{} is 
$$
\RD{\basis}{} = \left\{\U~:~\exists \phi(\mu)\ge 0,\, \Vint{\phi} > 0,
 \text{ such that } \U = \Vint{\basis\phi} \right\}.
$$
Any $\phi$ such that $\U = \vint{\basis \phi}$ is called a \emph{representing 
density}.
\end{definition}

The realizable set is a convex cone.

In the monomial basis $\basis = \mbasis$, a moment vector is realizable if 
and only if its corresponding Hankel matrices are positive definite 
\cite{Shohat-Tamarkin-1943}.
When a moment vector sits exactly on $\partial \RD{\mbasis}{}$, there is only 
one representing density, and it is a linear combination of point masses 
\cite{CurFial91}.
In this case, the corresponding Hankel matrices are singular.
This also causes the optimization problem to be arbitrarily poorly conditioned 
as the moment vector approaches $\partial \RD{\mbasis}{}$
\cite{AllHau12}.

Realizability conditions in the mixed-moment basis $\basis = \mmbasis$ are given 
in \cite{SchneiderAlldredge14} again using Hankel matrices for each half-interval $[-1, 0]$ and $[0, 1]$ as well as another condition to ``glue'' the 
half-interval conditions together.  In this case, only a subset of the moment 
vectors on $\partial \RD{\mmbasis}{}$ have unique representing densities, but 
those that do include point masses.

\subsubsection{The numerically realizable set}
\label{sec:NumQuad}

In general, angular integrals cannot be computed analytically.  We define a 
quadrature for functions $\phi:[-1,1]\to\R$ by nodes $\{\mu_i\}_{i=1}^{\nqmu}$ and 
weights 
$\{w_i\}_{i=1}^{\nqmu}$ such that
\begin{align*}
\sum\limits_{i=1}^{\nqmu} w_i \phi(\mu_i) \approx \Vint{\phi}
\end{align*}
Below we often abuse notation and write $\Vint{\phi}$ when in 
implementation we mean its approximation by quadrature.
Then, as defined in \cite{ahot2013}, the numerically realizable set is
$$
\RQ{\basis} = \left\{\U~:~\exists f_i > 0 \text{ s.t. } \U = \sum_{i = 
1}^{\nqmu}w_i \basis(\mu_i) f_i \right\} 
$$
Indeed, when replacing the integrals in the optimization problem \eqref{eq:primal} 
with quadrature, a minimizer can only exist when $\U \in \RQ{\basis}$.
It is straightforward to show that, as expected, $\RQ{\basis} \subseteq 
\RD{\basis}{}$.\\

The numerically realizable set $\RQ{\basis}$ is the convex cone generated by
$\RQone{\basis}$, the set of normalized moment vectors:
$$
\RQone{\basis} := \left\{ \U = (u_0, u_1, \ldots , u_{\nmom})^T \in
\RQ{\basis} : u_0 = 1 \right\};
$$
and $\RQone{\basis}$ is the interior of a convex polytope:
\begin{proposition}[\cite{ahot2013}]
\label{prop:RQ}
For any quadrature $\cQ$ with positive weights $w_i$, and for simplicity 
assuming $\basisComp_0(\mu) \equiv 1$,
\begin{align*}
\RQone{\basis} = 
\interior \co \left\{\basis(\mu_i)\right\}_{i=1}^{\nqmu}
\end{align*}
where $\interior$ indicates the interior and $\co$ 
indicates the convex hull.
\end{proposition}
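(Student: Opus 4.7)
The plan is to reduce the proposition to the classical convex-analytic characterization of the relative interior of the convex hull of finitely many points. The key observation is that a simple reparameterization identifies the coefficients $f_i$ in the definition of $\RQone{\basis}$ with convex-combination weights.

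Concretely, I would set $\lambda_i := w_i f_i$ for $i = 1, \ldots, \nqmu$. Because the quadrature weights satisfy $w_i > 0$ by hypothesis, this is a bijection between positive tuples $(f_1, \ldots, f_{\nqmu})$ and positive tuples $(\lambda_1, \ldots, \lambda_{\nqmu})$. Since $\basisComp_0 \equiv 1$, the normalization $u_0 = 1$ becomes $\sum_{i=1}^{\nqmu} \lambda_i = 1$, and the moment vector is $\U = \sum_i \lambda_i \basis(\mu_i)$. Hence
$$\RQone{\basis} = \left\{ \sum_{i=1}^{\nqmu} \lambda_i \basis(\mu_i) \,:\, \lambda_i > 0,\ \sum_{i=1}^{\nqmu} \lambda_i = 1 \right\},$$
so $\RQone{\basis}$ is exactly the set of \emph{strict} convex combinations of the points $\{\basis(\mu_i)\}_{i=1}^{\nqmu}$.

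From here I would invoke the standard fact that the set of strict convex combinations of a finite point set is precisely the relative interior of the convex hull of that set. Since every $\basis(\mu_i)$ lies in the affine hyperplane $\{u_0 = 1\}$, the topological interior of $\co\{\basis(\mu_i)\}$ in $\R^{\nmom + 1}$ is empty; thus $\interior$ in the proposition must be understood in the sense of the affine hull of the polytope (i.e.\ as the relative interior), which is the natural ambient space for normalized moment vectors.

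The only technical obstacle is justifying this classical equivalence between strict convex combinations and relative-interior points. One direction is an openness argument: since the open probability simplex is open in the affine simplex, perturbing the coefficients $\lambda_i$ produces an open neighborhood of $\U$ in the affine hull of $\co\{\basis(\mu_i)\}$. For the converse, given $\U$ in the relative interior, I would consider the barycenter $b := (1/\nqmu) \sum_i \basis(\mu_i)$, which has all coefficients strictly positive, and extend the segment from $b$ through $\U$ slightly past $\U$, writing $\U = (1 - t) y + t b$ with $y \in \co\{\basis(\mu_i)\}$ and $t \in (0, 1)$; convex-combining any representation of $y$ with that of $b$ makes every coefficient strictly positive. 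Both directions are standard, so this step is routine rather than substantive: essentially all of the content of the proposition resides in the reparameterization $\lambda_i = w_i f_i$.
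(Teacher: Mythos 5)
The paper does not actually prove this proposition---it imports it from \cite{ahot2013} and states it without argument---so there is no in-paper proof to compare against. Judged on its own, your proof is correct and is the natural one. The substitution $\lambda_i = w_i f_i$ is exactly the right reduction: positivity of the weights makes it a bijection between admissible tuples, and $\basisComp_0 \equiv 1$ turns the normalization $u_0 = 1$ into $\sum_i \lambda_i = 1$, identifying $\RQone{\basis}$ with the set of strictly positive convex combinations of $\{\basis(\mu_i)\}_{i=1}^{\nqmu}$. The remaining step---that this set equals the relative interior of the convex hull---is the classical fact (e.g.\ Rockafellar), and your two-directional sketch (openness of the affine image of the open simplex, and the prolongation-through-the-barycenter argument for the converse) is the standard way to prove it. You are also right to flag that $\interior$ must be read as the relative interior (equivalently, the interior within the affine hyperplane $\{u_0 = 1\}$, or the interior in $\R^{\nmom}$ after dropping the redundant zeroth coordinate), since the hull has empty interior in $\R^{\nmom+1}$; this is a genuine imprecision in the statement as written, not a gap in your argument.
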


\section{Realizability-preserving discontinuous Galerkin scheme}
\label{sec:dg}

In this section we introduce our high-order numerical method to simulate the 
moment system \eqref{eq:moment-closure}.  We use the Runge-Kutta 
discontinuous Galerkin (RKDG) approach \cite{CockburnShuPk,Cockburn1989}
and recent techniques for the 
numerical solution of the defining optimization problem \cite{ahot2013}
\eqref{eq:primal}-\eqref{eq:dual}.  Finally in this section we discuss the 
crucial issue of realizability and our linear scaling limiter to handle 
non-realizable moments in the solution.

\subsection{The discontinuous Galerkin formulation}
\label{sec:DGScheme}
We briefly recall the discontinuous Galerkin method for a general system with 
source term:
\begin{align}
\label{eq:AbbrvSystem}
\partial_t \U + \partial_x \bff(\U) &= \bs(\U),
\end{align}
where in our case $\bs(\U) := \sig{s}\br(\U) - \sig{a}\U + \Vint{\basis 
\source}$.
We follow the approach outlined in a series of papers by Cockburn and Shu
\cite{Cockburn1989,CockburnShuPk,CockburnShuP1}.
We divide the spatial domain $(\xL, \xR)$ into $\ncells$ cells $I_j = (x_{j - 
1/2}, x_{j + 1/2})$, where the cell edges are given by $x_{j \pm 1/2} = x_j \pm 
\dx / 2$ for cell centers $x_j = \xL + (j - 1 / 2)\dx$, and
$\dx = (\xR - \xL) /  \ncells$. 
For each $t$, we seek approximate solutions $\U_h(t, x)$ in the finite
element space
\begin{equation}
V_h^k = \{v \in L^1(\xL, \xR): v|_{I_{j}} \in P^k(I_j) \text{ for } 
 j \in \{ 1, \ldots , \ncells \} \}.
\label{eq:dg-space}
\end{equation}
where $P^{k}(I)$ is the set of polynomials of degree at most $k$ on the
interval $I$.
We follow the Galerkin approach: replace $\U$ in \eqref{eq:AbbrvSystem} by a 
solution of the form $\U_h \in V_h^k$ then multiply the resulting equation
by basis 
functions $v_h$ of $V_h^k$ and integrate over cell $I_j$ to obtain:
\begin{subequations}
\label{eq:dweakform1}
\begin{align}
 \partial_t \int_{I_j} \U_h(t, x)v_h(x)\,dx
 &+ \bff(\U_h(t, x_{j + 1/2}^-)) v_h(x_{j + 1/2}^-)
  - \bff(\U_h(t, x_{j - 1/2}^+)) v_h(x_{j - 1/2}^+) \nonumber \\
 &-\int_{I_j} \bff(\U_h(t, x)) \partial_x v_h(x)\,dx 
 = \int_{I_j} \bs(\U_h(t, x))v_h(x)\,dx \\
 \label{eq:dweakform1a}
 \int_{I_j} \U_h(0, x)v_h(x)\,dx &= \int_{I_j} \Uzero(x) 
v_h(x)\,dx 
\end{align}
\end{subequations}
where $x_{j \pm 1/2}^-$ and $x_{j \pm 1/2}^+$ denote the limits 
from left and right, respectively, and $\Uzero$ is the initial 
condition.
In order to approximately solve the Riemann problem at the cell-interfaces,
the fluxes $\bff(\U_h(t, x_{j 
+ 1/2}^\pm))$ at the points of discontinuity are both replaced by a 
numerical flux $\hat \bff(\U_h(t, x_{j + 1/2}^-), \U_h(t, x_{j + 1/2}^+))$, thus 
coupling the elements with their neighbors \cite{toro2009riemann}.
In this paper we use the global Lax-Friedrichs flux:%
\footnote{
The local Lax-Friedrichs flux could be used instead.
This would require computing the eigenvalues of the Jacobian in every
space-time cell to adjust the value of the numerical viscosity constant $C$
but would possibly decrease the overall diffusivity of the scheme.
However, since we are considering high-order methods, the decrease in
diffusivity achieved by switching to the local Lax-Fridrich flux should be
negligible.%
}
\begin{align*}
 \hat \bff(\bv, \bw) = \dfrac{1}{2} \left( \bff(\bv) + \bff(\bw) - C (\bw - 
\bv) \right),
\end{align*}
The numerical viscosity constant $C$ is taken as the global estimate of the 
absolute value of the largest eigenvalue of the Jacobian
$\partial \bff / \partial \U$.
We use $C = 1$, because
for the moment systems used here it is known that the largest eigenvalue is 
bounded by one in absolute value:
\begin{lemma}
\label{lem:eigenvalues}
The eigenvalues of the Jacobian $\partial \bff / \partial \U$ are bounded
in absolute value by one.
\end{lemma}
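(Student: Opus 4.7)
The plan is to exploit the Hessian structure that arises in entropy-based moment closures and the fact that $\mu \in [-1,1]$. First I would differentiate both $\bu = \Vint{\basis \ansatzu}$ and $\bff(\bu) = \Vint{\mu \basis \ansatzu}$ with respect to the multiplier $\alphahat$, using the explicit form $\ansatzu = \etad'(\basis^T \alphahat)$. This gives two matrices
$$
H(\alphahat) := \Vint{\basis \basis^T \etad''(\basis^T \alphahat)},
\qquad
K(\alphahat) := \Vint{\mu\, \basis \basis^T \etad''(\basis^T \alphahat)},
$$
both symmetric, with $H$ additionally positive definite (interior realizability implies strict positivity of $\etad''(\basis^T \alphahat)$ and linear independence of $\basis$ in $L^2$). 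Then by the chain rule, $\partial \bff/\partial \bu = K H^{-1}$.

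Next, I would symmetrize: the eigenvalues of $K H^{-1}$ coincide with those of the symmetric matrix $H^{-1/2} K H^{-1/2}$ via the similarity $H^{-1/2}(K H^{-1})H^{1/2}$, so they are all real, and by the Rayleigh quotient they equal the extremal values of $\bv^T K \bv / \bv^T H \bv$ over $\bv \ne 0$.

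The key estimate is that $H \pm K$ are positive semidefinite. Indeed,
$$
H(\alphahat) \pm K(\alphahat)
= \Vint{(1 \pm \mu)\, \basis \basis^T \etad''(\basis^T \alphahat)},
$$
and since $1 \pm \mu \ge 0$ on $[-1,1]$ and $\etad''>0$, the integrand is a PSD matrix pointwise, hence so is the integral. This yields $|\bv^T K \bv| \le \bv^T H \bv$ for all $\bv$, so every eigenvalue $\lambda$ of $\partial \bff/\partial \bu$ satisfies $|\lambda| \le 1$.

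Essentially no step is a real obstacle: the main subtlety is carefully justifying the chain-rule step, which needs the invertibility of $H$ (hence interior realizability of $\bu$) to define $\alphahat(\bu)$ smoothly via the implicit function theorem applied to the dual optimality condition $\bu = \Vint{\basis\, \etad'(\basis^T \alphahat)}$. The same argument carries over verbatim to the mixed-moment basis $\basis = \mmbasis$ and to the case where integrals are replaced by positive-weight quadrature (which is what is actually used in practice), since all sign arguments survive such replacement.
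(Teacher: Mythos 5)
Your proposal is correct and follows essentially the same route as the paper: both write $\partial\bff/\partial\U = \bJ(\alphahat)\bH(\alphahat)^{-1}$ with $\bH = \Vint{\basis\basis^T\etad''}$ and $\bJ = \Vint{\mu\basis\basis^T\etad''}$, reduce to a generalized eigenvalue/Rayleigh-quotient bound, and conclude from $|\mu|\le 1$ and $\etad''>0$ (your positive-semidefiniteness of $\bH\pm\bJ$ is just a repackaging of the paper's estimate $|\bd^T\bJ\bd|\le\bd^T\bH\bd$). Your explicit symmetrization via $\bH^{-1/2}\bJ\bH^{-1/2}$ and the remark about quadrature are slightly more careful than the paper's write-up, but the argument is the same.
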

\begin{proof}
For convenience, we present a slight generalization of Lemma 4 in 
\cite{Olbrant2012}.

We define $\bJ(\bsalpha) := \Vint{\mu \basis \basis^T \etad''(\basis^T 
\bsalpha)}$ and $\bH(\bsalpha):= \Vint{\basis \basis^T \etad''(\basis^T 
\bsalpha)}$.
Using the properties of $\bH(\bsalpha)$  given in \cite{Lev96},
by applying the chain rule we have
\begin{equation}
 \frac{\partial \bff (\U)}{\partial \U} = \bJ(\alphahat(\U))
 \frac{\partial \alphahat(\U)}{\partial \U}
  = \bJ(\alphahat(\U)) \bH(\alphahat(\U))^{-1}.
\label{eq:fluxJacobian}
\end{equation}
If $\bJ(\alphahat(\U)) \bH(\alphahat(\U))^{-1} \bc = \lambda \bc$ for some 
$\bc \ne 0$, then for $\bd = \bH(\alphahat(\U))^{-1} \bc$ we also have 
$\bJ(\alphahat(\U)) \bd = \lambda \bH(\alphahat(\U)) \bd$, and thus
$$
 |\lambda| \le \frac{\bd^T \bJ(\alphahat(\U)) \bd}{\bd^T \bH(\alphahat(\U)) \bd}
  = \frac{\Vint{\mu \left( \basis^T \bd \right)^2 
  \etad''(\alphahat(\U))}}
  {\Vint{\left( \basis^T \bd \right)^2 \etad''(\alphahat(\U))}} \le 1.
$$
The last inequality follows from the facts that $|\mu| \le 1$ and that 
$\etad'' > 0$, the latter of which is a consequence of the strict 
convexity of $\eta$.
\end{proof}

On each interval, the DG approximate solution $\U_h$ can be written as 
\begin{align}
\label{eqn:solution_form}
\U_h|_{I_j}(t, x) := \U_j(t, x) := \sum_{i=0}^{\order}\Uhat_j^i(t) 
\varphi_i\left( \frac{x - x_j}{\dx} \right)
\end{align}
where $\{\varphi_0, \varphi_1, \ldots ,\varphi_\order \}$ denote a 
basis for $P^k([-1/2, 1/2])$.
It is convenient to choose an orthogonal basis, so we use Legendre 
polynomials scaled to the interval $[-1/2, 1/2]$:
\begin{align*}
\varphi_0(y) = 1, \quad \varphi_1(y) = 2y, \quad \varphi_2(y) = 
 \frac12 (12y^2-1), \: \ldots
\end{align*}
With an orthogonal basis the cell means $\Ubar_j$ are easily available from the 
expansion coefficients $\Uhat_j$:
$$
\Ubar_j(t) := \frac{1}{\dx} \int_{I_j} \U_j(t, x) dx
 = \frac{1}{\dx} \sum_{i=0}^{\order} \Uhat_j^i(t)
 \int_{I_j} \varphi_i\left( \frac{x - x_j}{\dx} \right) dx
 = \Uhat_j^0(t)
$$
We collect the coefficients $\Uhat_j^{(i)}(t)$ into the $(\order + 1) \times 
(\nmom + 1)$ matrix
$$
\Uhat_j(t) = \left( \begin{array}{c}
           \left( \Uhat^0_j(t) \right)^T \\
           \vdots \\
           \left( \Uhat^{\order}_j(t) \right)^T
          \end{array} \right)
$$

Using the form of the approximate solution in (\ref{eqn:solution_form}), we can 
write \eqref{eq:dweakform1} in matrix form:
\begin{subequations}
\label{eq:dweakform2}
\begin{align}
\bM \partial_t \Uhat_j + 
\bF(\Uhat_{j - 1}, \Uhat_j, \Uhat_{j + 1}) - 
\bV(\Uhat_j) &= \bS(\Uhat_j) \label{eq:weakform_m1} \\
\left(\bM \Uhat_j(0)\right)_{i\ell} &= 
\int_{I_j} u_{\ell, t = 0}(x) \varphi_i\left( \frac{x - x_j}{\dx} 
\right)  dx \label{eq:weakform_m2}
\end{align}
\end{subequations}
for $j \in \{ 1, 2, \ldots , \ncells \}$, with
\begin{subequations}
\label{eq:weakFormDefs}
\begin{align}
 (\bfM)_{i\ell} =& \int_{I_j} \varphi_i \left( \frac{x - x_j}{\dx} \right)
  \varphi_\ell \left( \frac{x - x_j}{\dx} \right)~dx, \\
 (\bF(\Uhat_{j - 1}(t), \Uhat_j(t), \Uhat_{j + 1})(t))_{i\ell} =& 
  \hat f_{\ell}(\U_j(t, x_{j + 1/2}^-), \U_{j + 1}(t, x_{j + 1/2}^+))
  \varphi_i(1 / 2) \label{eq:numFlux} \\
 & - \hat f_{\ell}(\U_{j - 1}(t, x_{j - 1/2}^-),
  \U_j(t, x_{j - 1/2}^+))\varphi_i(-1 / 2),\\
 (\bV(\Uhat_j(t)))_{i\ell} =& \int_{I_j} f_{\ell}(\U_j(t, x)) \partial_x 
  \varphi_i \left( \frac{x - x_j}{\dx} \right)~dx,\\
 (\bS(\Uhat_j(t)))_{i\ell} =& \int_{I_j} s_{\ell}(\U_j(t, x))
 \varphi_i \left( \frac{x - x_j}{\dx} \right)~dx,
\end{align}
\end{subequations}
where $u_{\ell, t = 0}$, $\hat f_{\ell}$, $f_{\ell}$, and $s_{\ell}$ are the
$\ell$-th components 
of $\Uzero$, $\hat \bff$, $\bff$, and $\bs$ respectively.
Notice that $\bM$ is diagonalized by the choice of an orthogonal basis $\{ 
\varphi_i \}$.
We can write \eqref{eq:weakform_m1} as the ordinary differential equation
\begin{align}
\label{eq:ode1}
\partial_t \Uhat_j &= L_h(\Uhat_{j - 1}, \Uhat_j, \Uhat_{j + 1}), \quad 
\textrm{for } 
j \in \{1, \ldots , \ncells\} \text{ and } t \in (0, T),
\end{align}
with initial condition specified in \eqref{eq:weakform_m2}.

Boundary conditions are incorporated into the quantities $\U_0(t, x_{1/2})$
and $\U_{\ncells + 1}(t, x_{\ncells + 1/2})$, which we have not defined yet
but appear in the numerical flux \eqref{eq:numFlux} for the first and last
cells.
To define these terms, first we smoothly extend the definitions of
$\psiL(t, \mu)$  and $\psiR(t, \mu)$ to all $\mu$
(note that while moments are defined using integrals over all $\mu$, the
boundary conditions are in \eqref{eq:bcL}--\eqref{eq:bcR} only defined for
$\mu$ corresponding to incoming data),%
\footnote{
Although this is indeed the most commonly used approach, its inconsistency with 
the original boundary conditions \eqref{eq:bcL}--\eqref{eq:bcR}, is still an
open research topic 
\cite{Pomraning-1964,Larsen-Pomraning-1993-1, 
Larsen-Pomraning-1993-2,Struchtrup-2000,Levermore-2009}.%
}
then simply take $\U_0(t, x_{1/2}) := \vint{\basis \psiL(t, \cdot)}$ and
$\U_{\ncells + 1}(t, x_{\ncells + 1/2}) := \vint{\basis \psiR(t, \cdot)}$.
This completes the spatial discretization.

In this paper, except for some of the convergence tests in 
\secref{sec:manu-soln}, we use quadratic polynomials ($k = 2$) resulting in a 
third-order approximation.
The integrals in \eqref{eq:weakFormDefs} are computed using quadrature exact for 
polynomials of degree five to ensure the numerical scheme is third-order 
convergent.
We use the four-point Gauss-Lobatto quadrature rule since the function 
evaluations at the interval boundaries can be reused for the numerical fluxes 
$\bF$.

\subsection{Runge-Kutta time integration}
For a fully third-order method, we require a time-stepping
scheme for \eqref{eq:ode1} that is at least third-order.
We use the standard explicit SSP$(3,3)$ third-order strong
stability-preserving (SSP) Runge-Kutta  time discretization introduced in
\cite{CWShu}.
Let $\{t^\timelvl\}_{\timelvl=0}^{\ntime}$ denote time instants in $[0, \tf]$
with $t^\timelvl = \timelvl\Delta t$, and for each cell $j \in \{ 1, 2,
\ldots , \ncells \}$ let the initial coefficients $\Uhat_j(0)$ be defined as
in \eqref{eq:weakform_m2}.
Then for $\timelvl \in \{ 0, 1, \ldots , \ntime - 1 \}$ we compute
$\Uhat_j(t^{\timelvl})$ as follows:
\begin{align*}
 \Uhat_j^{(1)} &= \Uhat_j(t^{\timelvl}) + \dt 
  L_h(\Uhat_{j - 1}(t^{\timelvl}), \Uhat_j(t^{\timelvl}), \Uhat_{j + 1}
  t^{\timelvl})); \\
 \Uhat_j^{(2)} &= \dfrac{3}{4}\Uhat_j(t^{\timelvl}) + 
  \dfrac{1}{4}(\Uhat_j^{(1)} + \dt L_h(\Uhat_{j - 1}^{(1)}, \Uhat_j^{(1)},
  \Uhat_{j+1}^{(1)})); \\
 \Uhat_j(t^{\timelvl + 1}) &= \dfrac{1}{3}\Uhat_j(t^{\timelvl}) + 
  \dfrac{2}{3}(\Uhat_j^{(2)} + \dt L_h(\Uhat_{j - 1}^{(2)}, \Uhat_j^{(2)},
  \Uhat_{j + 1}^{(2)})).
\end{align*}
This specific Runge-Kutta method is a convex combination of forward Euler 
steps, a property which below helps us prove that the cell means of the 
internal stages are realizable.

A scheme of higher order could be achieved by increasing the degree $\order$ of 
the approximation space $V_h^k$ as well as the order of the Runge-Kutta 
integrator.
Unfortunately SSP-RK schemes with positive weights can at most be fourth order
\cite{Ruuth2004,Gottlieb2005}.
A popular solution is given by the so-called 
Hybrid Multistep-Runge-Kutta SSP methods.
A famous method is the seventh-order hybrid method in \cite{Huang2009} while 
recently two-step and general multi-step SSP-methods of high order have been
investigated \cite{Ketcheson2011,Bresten2013}.

\subsection{Numerical optimization}
\label{sec:Optimization}

In order to evaluate $\bff(\U)$ and $\br(\U)$ on the spatial quadrature points 
in each Runge-Kutta stage, we first compute the multipliers
$\alphahat(\U)$ solving the dual problem \eqref{eq:dual}.
For the Maxwell-Boltzmann entropy the dual objective function and its gradient 
are
$$
f(\bsalpha) := \Vint{\exp(\basis^T \bsalpha)} - \U^T \bsalpha \quand
\bg(\bsalpha) = \Vint{\basis \exp(\basis^T \bsalpha)} - \U,
$$
respectively.

We use the numerical optimization techniques proposed in \cite{ahot2013}.
The stopping criterion for the optimizer is given by
$$
\| \bg(\bsalpha) \|_2 < \tau,
$$
where $\| \cdot \|_2$ is the Euclidean norm, and $\tau$ is a user-specified 
tolerance, and we also use the isotropic regularization technique to return 
multipliers for nearby moments when the optimizer fails.%
\footnote{
The optimizer can fail for two reasons:  either the Cholesky 
factorization required to find the Newton direction fails or the number of 
iterations reaches a user-specified maximum $k_{\max}$.
}
Isotropically regularized moments are defined by the convex combination
$$
\bv(\bu, r) := (1 - r) \bu + r u_0 \uIso,
$$
where 
\begin{equation}
 \uIso = \frac12 \Vint{\basis}
\label{eq:uIso}
\end{equation}
is the moment vector of the isotropic density $\phi(\mu) \equiv 1/2$.
The form of $\bv(\bu, r)$ is also chosen so that $\bv(\bu, r)$ has the same 
zeroth-order moment as $\bu$.
We define for an outer loop an increasing sequence $\{ r_m \}$ for $m = 0, 1, 
\ldots, m_{\max}$.  We begin at $m = 0$ with $r = r_0 := 0$ and only increment 
$m$ if the optimizer fails to converge for $\bv(\bu, r_m)$ after $k_r$
iterations.
It is assumed that $r_{m_{\max}}$ is chosen large enough that the optimizer 
will always converge for $\bv(\bu, r_{m_{\max}})$ for any realizable $\bu$. 

\subsection{Realizability preservation and limiting}

In order to evaluate the flux-term $\bff(\U_j(t, x_{jq}))$ at the spatial quadrature nodes 
$x_{jq}$ in the $j$-th cell, we at least need $\U_j(t, x_{jq}) \in 
\RD{\basis}{}$ for each node, although when the angular integrals are 
approximated by quadrature, we in fact need $\U_j(t, x_{jq}) \in 
\RQ{\basis}$.
Unfortunately higher-order schemes typically cannot guarantee this, as
has been 
observed in the context of the compressible Euler equations (which are
indeed in the hierarchy of minimum-entropy models) in \cite{Zhang2010}.

We can, however, first show that, when the moments at the quadrature nodes are 
realizable, our DG scheme preserves realizability of the cell means 
$\Ubar_j(t)$ under a CFL-type condition.  
With realizable cell means available, we then apply a linear scaling limiter to 
each cell pushing $\U_j(t, x_{jq})$ towards the cell mean and thus into the 
realizable set for each node $x_{jq}$. 

Following the arguments in \cite{Zhang2010a,Zhang2011}, this limiter does not
destroy the accuracy of the scheme in case of smooth solutions if $\Ubar_j$
is not on the boundary of the realizable set.
We test this numerically in \secref{sec:limiter-tests}.

\subsubsection{Realizability preservation of the cell means}

To prove realizability preservation of the cell means we will need three main 
ingredients:  first, an exact quadrature to represent the cell means using 
point values from the cell; second, a representation of the moments collision 
operator; and finally a lemma that allows us to add the flux term without 
leaving the realizable set.

First, following \cite{Zhang2010,Zhang2011a}, we consider the 
$\nqs$-point Gauss-Lobatto rule and use its exactness for polynomials of degree 
$\order \le 2\nqs - 3$ to write the cell means as
\begin{equation}
 \Ubar_j(t) = \frac{1}{\dx} \int_{I_j} \U_j(t, 
  x) dx = \sum\limits_{q = 1}^{\nqs}w_q \U_{j}(t, x_{jq}),
\label{eq:cellMeanWithQuad}
\end{equation}
where $x_{jq} \in I_j$ are the quadrature nodes on the $j$-th cell, and $w_q$ 
are the weights for the quadrature on the reference cell $[-1/2, 1/2]$.
We choose the Gauss-Lobatto quadrature in particular because it includes the 
endpoints, that is
\begin{equation}
 x_{j - 1/2}^+ = x_{j1} \quand x_{j + 1/2}^- = x_{j\nqs}
\label{eq:GLquadEndPoints}
\end{equation}

Secondly, we note that since the collision kernel $T$ in
\eqref{eq:collisionOperator} is positive by assumption, the moments of the 
collision operator applied to the entropy ansatz can be written as the 
difference between a realizable moment vector $\UcollR(\U)$ and the given 
moment vector $\U$:
\begin{align}
\label{eq:CollisionOperatorMoments}
 \br(\U) = \Vint{\basis \collision{\ansatz_{\U}}} = \UcollR(\U) - \U.
\end{align}
For example, in the case of isotropic scattering ($T \equiv 1/2$), we have 
$\UcollR(\U) = u_0 \uIso$ (where $\uIso$ are the moments of the normalized
isotropic density, see \eqref{eq:uIso}).

Finally, we use the following lemma:
\begin{lemma} \label{lem:uPlusF}
If $\bu \in \RD{\basis}{}$ and $a - |b| \ge 0$, then $a\bu + b\bff(\bu) \in 
\RD{\basis}{}$.
\end{lemma}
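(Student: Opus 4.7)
The plan is to exhibit an explicit nonnegative representing density for $a\bu + b\bff(\bu)$, which by the definition of $\RD{\basis}{}$ will immediately yield realizability.

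First I would use the form of the flux induced by the entropy closure. Since $\bu \in \RD{\basis}{}$, the ansatz $\ansatzu$ from \eqref{eq:psiME} is nonnegative and satisfies $\bu = \vint{\basis \ansatzu}$, and by definition $\bff(\bu) = \vint{\mu \basis \ansatzu}$. Linearity of the angular integral then gives
\begin{equation*}
 a\bu + b\bff(\bu) = \Vint{(a + b\mu)\, \basis\, \ansatzu}.
\end{equation*}

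Next I would check nonnegativity of the weight $(a + b\mu)\ansatzu$ on $[-1,1]$. For $\mu \in [-1,1]$ we have $|b\mu| \le |b|$, so the hypothesis $a - |b| \ge 0$ gives $a + b\mu \ge a - |b| \ge 0$. Since $\ansatzu \ge 0$, the function $(a + b\mu)\ansatzu(\mu)$ is nonnegative on $[-1,1]$, so it is a valid representing density for $a\bu + b\bff(\bu)$, which therefore lies in $\RD{\basis}{}$.

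There is no real obstacle here; the only thing to watch is the edge case in which $(a + b\mu)\ansatzu \equiv 0$, which would require $\vint{(a+b\mu)\ansatzu} > 0$ in the zeroth component to genuinely satisfy the strict positivity in the definition of $\RD{\basis}{}$. Since $\ansatzu$ is strictly positive whenever $\bu$ has a nonzero zeroth moment (from the exponential form $\ansatzu = \exp(\basis^T\alphahat)$ in the Maxwell-Boltzmann case), and $a+b\mu > 0$ on a set of positive measure unless $a=b=0$ (a trivial case), the zeroth moment of the new density is positive. Thus $a\bu + b\bff(\bu) \in \RD{\basis}{}$ as claimed.
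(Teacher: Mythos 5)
Your proof is correct and follows essentially the same route as the paper's: write $a\bu + b\bff(\bu) = \Vint{(a+b\mu)\basis\,\ansatz_{\bu}}$ using the entropy ansatz as representing density, then observe $a + b\mu \ge a - |b| \ge 0$ on $[-1,1]$. Your extra remark about the strict positivity of the zeroth moment (ruling out the degenerate $a=b=0$ case) is a detail the paper's proof leaves implicit, but it does not change the argument.
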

\begin{proof}
Since $\bu \in \RD{\basis}{}$, then there exists a solution to the minimum 
entropy problem $\ansatz_{\bu}$, so that $\bu = \vint{\basis \ansatz_{\bu}}$ 
and $\bff(\bu) = \vint{\mu \basis \ansatz_{\bu}}$.  Thus
\begin{align*}
 a\bu + b\bff(\bu) &= \Vint{\basis \left( a + b\mu \right) \ansatz_{\bu}}.
\end{align*}
Since $\mu \in [-1, 1]$, the affine polynomial satisfies $a + b\mu \ge a - 
|b|$, so by assumption we have $(a + b\mu)\ansatz_{\bu} \ge 0$, which is a 
nonnegative measure representing $a\bu + b\bff(\bu)$.
\end{proof}

\begin{theorem}
\label{prop:eulerStepRealizable}
Assume $\source \ge 0$ and $2\nqs - 3 \ge k$, and let $\sig{t} := \sig{s} + 
\sig{a}$.
Consider the cell means at time instants $t^\timelvl$,
$$
\Ubar^\timelvl_j := \frac{1}{\dx} \int_{I_j} \U_j(t^\timelvl, x) dx.
$$
If the moment vectors $\U^\timelvl_{jq} := \U_j(t^\timelvl, x_{jq})$ at each spatial 
quadrature point $x_{jq}$ are in $\RD{\basis}{}$ (or $\RQ{\basis}$) and the 
CFL condition
\begin{align}
\label{eq:HO-CFL}
\frac{\dt}{\dx} < w_{\nqs}(1 - \sig{t}\dt).
\end{align}
holds, where $w_{\nqs}$ denotes the quadrature weight of the last quadrature 
weight of the $\nqs$-point Gauss-Lobatto quadrature rule on the reference cell 
$[-1/2, 1/2]$, then the cell means $\Ubar^{\timelvl + 1}_j$ computed by taking a 
forward-Euler time step for \eqref{eq:dweakform2} are also in $\RD{\basis}{}$ 
(respectively $\RQ{\basis}$).
\end{theorem}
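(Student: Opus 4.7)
The plan is to write the forward-Euler update of the cell mean $\Ubar_j^{\timelvl+1}$ as a nonnegative linear combination of vectors which are either pointwise realizable (given in the hypothesis), collision/source moments (which are automatically realizable), or of the form $a\bu + b\bff(\bu)$ with $a \ge |b|$ so that \lemref{lem:uPlusF} applies. Once we have such a decomposition, realizability of $\Ubar_j^{\timelvl+1}$ is immediate since $\RD{\basis}{}$ is a convex cone.

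First I would take $v_h \equiv \varphi_0 \equiv 1$ in the weak form \eqref{eq:dweakform1}. Since $\partial_x \varphi_0 = 0$ the volume-flux integral vanishes, leaving
\begin{align*}
 \Ubar_j^{\timelvl+1} = \Ubar_j^{\timelvl}
  - \tfrac{\dt}{\dx}\bigl[\hat\bff(\U_{j\nqs}^\timelvl, \U_{j+1,1}^\timelvl)
    - \hat\bff(\U_{j-1,\nqs}^\timelvl, \U_{j1}^\timelvl)\bigr]
  + \tfrac{\dt}{\dx}\int_{I_j}\bs(\U_j^\timelvl)\,dx,
\end{align*}
where I have used \eqref{eq:GLquadEndPoints} to identify the interface traces with the first/last Gauss-Lobatto nodes. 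Because $2\nqs-3 \ge k$, the Gauss-Lobatto rule exactly integrates the polynomial cell mean and (by consistency of the scheme) also represents the source integral, so I can replace $\Ubar_j^\timelvl$ by $\sum_q w_q \U_{jq}^\timelvl$ and $\tfrac{1}{\dx}\int_{I_j}\bs(\U_j^\timelvl)\,dx$ by $\sum_q w_q \bs(\U_{jq}^\timelvl)$. Using \eqref{eq:CollisionOperatorMoments} I then split $\bs(\U) = \sig{s}\UcollR(\U) - \sig{t}\U + \Vint{\basis \source}$. The $\UcollR$ and $\Vint{\basis \source}$ pieces (the latter realizable by $\source \ge 0$) contribute $\dt \sum_q w_q [\sig{s}\UcollR(\U_{jq}^\timelvl) + \Vint{\basis \source_{jq}}]$, a nonnegative combination of realizable vectors which I set aside.

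The remaining terms are
\begin{align*}
 (1 - \sig{t}\dt)\sum_{q=1}^{\nqs} w_q \U_{jq}^\timelvl
 - \tfrac{\dt}{\dx}\bigl[\hat\bff(\U_{j\nqs}^\timelvl,\U_{j+1,1}^\timelvl)
    - \hat\bff(\U_{j-1,\nqs}^\timelvl,\U_{j1}^\timelvl)\bigr].
\end{align*}
Expanding the Lax-Friedrichs flux with $C=1$ and grouping terms at each interface, the contribution of the right endpoint becomes
\begin{align*}
 \bigl[(1-\sig{t}\dt)w_\nqs - \tfrac{\dt}{2\dx}\bigr]\U_{j\nqs}^\timelvl
 - \tfrac{\dt}{2\dx}\bff(\U_{j\nqs}^\timelvl)
 + \tfrac{\dt}{2\dx}\bigl[\U_{j+1,1}^\timelvl - \bff(\U_{j+1,1}^\timelvl)\bigr],
\end{align*}
and symmetrically for the left endpoint (using $w_1 = w_\nqs$ for Gauss-Lobatto). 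The second bracket is realizable by \lemref{lem:uPlusF} with $a = b = \tfrac{\dt}{2\dx}$, and the first pair is realizable by the same lemma provided $(1-\sig{t}\dt)w_\nqs - \tfrac{\dt}{2\dx} \ge \tfrac{\dt}{2\dx}$, which is exactly the CFL condition \eqref{eq:HO-CFL}. The interior quadrature contributions $(1-\sig{t}\dt) w_q \U_{jq}^\timelvl$ for $q = 2, \ldots, \nqs-1$ are nonnegative multiples of realizable vectors since $1 - \sig{t}\dt \ge \tfrac{\dt}{\dx w_\nqs} > 0$ by the CFL condition. Summing all pieces gives the claim.

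For the numerically realizable variant, \lemref{lem:uPlusF} carries over: if $\U = \sum_i w_i \basis(\mu_i) f_i$ with $f_i > 0$ then $a\U + b\bff(\U) = \sum_i w_i(a+b\mu_i)\basis(\mu_i) f_i$, and $a + b\mu_i \ge a - |b| \ge 0$ since $|\mu_i|\le 1$, so the vector lies in $\RQ{\basis}$; the rest of the argument is unchanged. The main bookkeeping obstacle is keeping track of how the $(1-\sig{t}\dt)$ factor multiplying the cell-mean-quadrature sum interacts with the bare $\tfrac{\dt}{\dx}$ factor multiplying the flux differences — this asymmetry is precisely what produces the $(1-\sig{t}\dt)$ tightening of the usual Zhang-Shu CFL bound.
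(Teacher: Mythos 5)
Your proof is correct and follows essentially the same route as the paper: the same forward-Euler cell-mean update, the same Gauss--Lobatto decomposition exploiting $2\nqs-3\ge k$ and the endpoint property \eqref{eq:GLquadEndPoints}, the same splitting of the collision term via $\UcollR$, and the same applications of \lemref{lem:uPlusF} (with $a=b=\dt/(2\dx)$ for the neighbor traces and with the CFL condition for the own-cell endpoints). Your explicit verification that \lemref{lem:uPlusF} transfers to $\RQ{\basis}$ via the quadrature representation is a small but welcome addition that the paper only asserts implicitly.
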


\begin{proof}
The following arguments only use the fact that the realizable set 
is a convex cone, and therefore can be applied to either $\RD{\basis}{}$ or 
$\RQ{\basis}$ in exactly the same way.  For clarity of exposition, we also 
begin with the case $\source \equiv 0$.

By the orthogonality of our basis, $\Ubar_j(t) = \Uhat_j^{(0)}(t)$.  Therefore 
we use the subequations in \eqref{eq:weakform_m1} for $\Uhat_j^{(0)}$ (where, 
in particular we have $\partial_x \varphi_0 \equiv 0$).
We use the notation $\U_j^{\timelvl}(x) = \U_j(t^\timelvl, x)$, so that with a
forward-Euler approximation for the time derivative \eqref{eq:weakform_m1} gives
\begin{align*}
 \Ubar_j^{\timelvl + 1} = \Ubar_j^\timelvl + \dt &\left( 
 -\frac{\hat \bff(\U_j^\timelvl(x_{j + 1/2}^-),
  \U_{j + 1}^\timelvl(x_{j + 1/2}^+))
  - \hat \bff(\U_{j - 1}^\timelvl(x_{j - 1/2}^-), 
  \U_j^\timelvl(x_{j - 1/2}^+))}
  {\dx} \right. \\
  &\quad- \sig{a}\Ubar_j^\timelvl + \sig{s}\br(\U_j^\timelvl) \left.
  \vphantom{\frac12} 
\right).
\end{align*}
Now using the fact that the cell interfaces are the first and last 
quadrature nodes (recall \eqref{eq:GLquadEndPoints}), we now substitute the 
definition of $\hat \bff$ and the appropriate representation of the moments of 
the collision operator \eqref{eq:CollisionOperatorMoments}, then use the 
quadrature formula for the cell means \eqref{eq:cellMeanWithQuad}, and finally 
collect terms to get:
\begin{subequations}
\label{eq:uNplus1}
\begin{align}
\allowdisplaybreaks
 \Ubar_j^{\timelvl + 1} =& \Ubar_j^\timelvl + \dt \left( 
  -\frac{1}{2\dx} \left(
  \bff\left(\U^\timelvl_{j\nqs}\right) + \bff\left(\U^\timelvl_{(j + 1)1}\right)
  - \left(\U^\timelvl_{(j + 1)1} - \U^\timelvl_{j\nqs}\right) \right. \right.
  \nonumber \\
 &\qquad\qquad - \left. \left(\bff\left(\U^\timelvl_{(j - 1)\nqs}\right) + 
  \bff\left(\U^\timelvl_{j1}\right) - \left(\U^\timelvl_{j1}
  - \U^\timelvl_{(j - 1)\nqs}\right)\right) \right) \nonumber \\
 &\qquad\qquad - \sig{a}\Ubar_j^\timelvl + \sig{s}(\UcollR(\U_j^\timelvl)
  - \Ubar_j^\timelvl) \left. 
  \vphantom{\frac12} \right) \nonumber \\
 =& \sum\limits_{q = 1}^{\nqs}w_q \U^\timelvl_{jq} + \dt \left( 
  -\frac{1}{2\dx} \left(
  \bff\left(\U^\timelvl_{j\nqs}\right) + \bff\left(\U^\timelvl_{(j + 1)1}\right)
  - \left(\U^\timelvl_{(j + 1)1} - \U^\timelvl_{j\nqs}\right) \right. \right.
  \nonumber \\
 &\qquad\qquad\qquad\qquad - \left. \left(\bff\left(\U^\timelvl_{(j - 1)\nqs}
  \right) + 
  \bff\left(\U^\timelvl_{j1}\right) - \left(\U^\timelvl_{j1}
  - \U^\timelvl_{(j - 1)\nqs}\right)\right) \right) \nonumber \\
 &\qquad\qquad\qquad\qquad - \sig{t}\sum\limits_{q = 1}^{\nqs}w_q
  \U^\timelvl_{jq} + 
  \sig{s}\UcollR\left(\U_j^\timelvl\right) \left. \vphantom{\frac12} \right) 
  \nonumber \\
 =& \sum\limits_{q = 2}^{\nqs - 1} w_q (1 - \dt\sig{t}) \U^\timelvl_{jq} + \dt 
  \sig{s}\UcollR\left(\U_j^\timelvl\right) \label{eq:uNplus1-1} \\
 &+ \frac{\dt}{2\dx} \left( \U^\timelvl_{(j + 1)1}
  - \bff\left(\U^\timelvl_{(j + 1)1} \right) 
  + \U^\timelvl_{(j - 1)\nqs} + \bff\left(\U^\timelvl_{(j - 1)\nqs} \right)
  \right) \label{eq:uNplus1-2} \\
 &+ \left( w_1 - \frac{\dt}{2\dx} - \dt\sig{t}w_1 \right) \U^\timelvl_{j1} + 
  \frac{\dt}{2\dx}\bff\left( \U^\timelvl_{j1} \right) \label{eq:uNplus1-3} \\
 &+ \left( w_{\nqs} - \frac{\dt}{2\dx} - \dt\sig{t}w_{\nqs} \right) 
  \U^\timelvl_{j\nqs} - \frac{\dt}{2\dx}\bff\left( \U^\timelvl_{j\nqs} \right)
  \label{eq:uNplus1-4}
\end{align}
\end{subequations}
Keeping in mind that we have assumed that each moment vector $\U^\timelvl_{jq}$ 
is realizable, we consider each of the final lines:
\begin{itemize}
 \item If $\sig{t} \dt < 1$, a condition which is indeed weaker than 
  \eqref{eq:HO-CFL}, the expression in the first line, \eqref{eq:uNplus1-1}, is 
  a positive linear combination of realizable moments.  Since the realizable 
  set is a convex cone, this expression is realizable.
 \item The terms in the second line \eqref{eq:uNplus1-2} can be shown to be 
  realizable by two applications of \lemref{lem:uPlusF} with $a = 1$ and $b = 
  \pm 1$.
 \item The expressions in the last line, \eqref{eq:uNplus1-3} and 
  \eqref{eq:uNplus1-4}, are each realizable according to \lemref{lem:uPlusF} 
  and \eqref{eq:HO-CFL} (and recalling that $w_1 = w_{\nqs}$).
\end{itemize}
Finally, $\Ubar_j^{\timelvl + 1}$ is realizable since it is a sum of realizable 
moment vectors.

When $\source \ge 0$, notice that this simply adds to \eqref{eq:uNplus1}
the term $\vint{\basis \source}$, which is realizable and thus does not
affect the conclusion.
\end{proof}

Since we are using SSP-Runge-Kutta time-stepping schemes, whose stages are 
convex combinations of forward Euler steps, \thmref{prop:eulerStepRealizable} 
guarantees that under the appropriate CFL condition, the cell means for every 
Runge-Kutta stage are realizable.
In particular, the SSP(3, 3) which we use is a convex combination of Euler 
steps all with time step $\dt$.


\longpaper{
\begin{remark}
As in \cite{AllHau12} we need to modify the CFL condition a little bit more. 
Since we don't solve the optimization/closure problem analytically we make some 
numerical error, compare Section \ref{sec:NumQuad}.

For now, let $\psi_{j,\beta}$ be the (unknown) analytical solution of the 
optimization problem at quadrature point $x_\beta$ and $\hat\psi_{j,\beta}$ the 
approximative solution. Their moments are denoted accordingly by $\U_{j,\beta}$ 
and $\hat\U_{j,\beta}$, respectively. In the above proof we need to replace  
$\psi_{j,\beta}$ by the corresponding approximate solution $\hat\psi_{j,\beta} 
= \gamma_{j,\beta}\psi_{j,\beta}$ with $\gamma_{j,\beta} = 
\cfrac{\hat\psi_{j,\beta}}{\psi_{j,\beta}}$ an error indicator for the closure 
problem at quadrature point $\hat{x}_{j,\beta}$. Now, the last line of the 
proof has the form
\begin{align*}
\Xi_j^{(\timelvl)} 
&\hspace{-2pt}\geq\hspace{-5pt}\sum\limits_{\beta=2}^{\nqs-1}\hat{w}
_\beta\left(1-\dt\gamma_{j,\beta}\left(\sigma_{a,j,\beta}+\collconst_{j,\beta}
\right)\right) \psi_{j,\beta}^{(\timelvl)} + 
\left(\hat{w}_1\left(1-\dt\gamma_{1,\beta}\left(\sigma_{a,1,\beta}+\collconst_{1
,\beta}\right)\right)\right) \psi_{j-\frac12}^+
\\&\hskip 0.5cm -\gamma_{1,\beta}\frac{\dt}{\dx} \psi_{j-\frac12}^++ 
\left(\hat{w}_\nqs\left(1-\gamma_{\nqs,\beta}\dt\left(\sigma_{a,\nqs,\beta}
+\collconst_{\nqs,\beta}\right)\right)-\gamma_{\nqs,\beta}\frac{\dt}{\dx}
\right) \psi_{j+\frac12}^-
\end{align*}
Now the corresponding CFL condition reads
\begin{align}
\label{eq:HOCFL2}
\gamma\Delta t < \cfrac{\hat{w}_1\Delta 
x}{\left(\sigma_{a,\max}+\collconst_{\max}\right)\hat{w}_1\Delta x + 1}
\end{align}
where $\gamma = \max\limits_{j=1,\ldots \ncells}\{\gamma_{j\pm\frac12}^\mp\}$.
\end{remark}
}{
}

\subsubsection{Realizability-preserving limiter}
\thmref{prop:eulerStepRealizable} makes the assumption that the 
point-values of the local DG polynomials are realizable, and this must also
hold in order to evaluate the flux $\bff$ at the moment vectors on the
quadrature points.
This can be achieved 
by applying a linear scaling limiter in each cell.
Recall the definition of $\U_j(t, x)$:
\begin{align*}
 \U_j(t, x) = \sum\limits_{i=0}^{\order}\Uhat_j^{(i)}(t) \varphi_i 
  \left(\frac{x - x_j}{\dx}\right) = \bar{\U}_j(t) + \sum\limits_{i=1}^{\order} 
  \Uhat_j^{(i)}(t) \varphi_i \left( \frac{x - x_j}{\dx} \right).
\end{align*}
We can see that using convexity of the realizable set, if $\Ubar_j$ is 
realizable, then for each quadrature point there exists a $\theta \in 
[0,1]$ such that
\begin{align*}
 \U_j^\theta(t, x_{jq}) := \theta \Ubar_j(t) + (1 - \theta) \U_j(t, x_{jq})
\end{align*}
is realizable.  Indeed, by inserting the definition of $\U_j(t, x_{jq})$ from 
above, we can write the limited moment vector as
\begin{align*}
 \U_j^\theta(t, x_{jq})
  = \Ubar_j(t) + (1 - \theta) \sum\limits_{i = 1}^{\order} 
  \Uhat_{j}^{(i)}(t) \varphi_i\left( \frac{x_{jq} - x_j}{\dx} \right);
\end{align*}
thus when limiting is necessary, the higher-order coefficients
are damped while the cell mean remains unchanged.
An example of the limiting process is illustrated in 
\figref{fig:QuadLimiterEx1}, which considers the following polynomial 
representation of an M${}_1$ solution:
$$
\begin{pmatrix} u_0\\u_1 \end{pmatrix} 
 = \begin{pmatrix} 1 & 0.5 &-0.2\\0.8 & 0.2 & 0.6 \end{pmatrix}
 \begin{pmatrix} 1 \\ 2x \\  \frac12(12x^2-1) \end{pmatrix}
$$

After limiting with $\theta > 9/11$ the vector $(u_0^\theta, 
u_1^\theta)$ becomes realizable.

\begin{figure}
\centering
\subfloat[Before limiting; notice the nonrealizable moments at the end points.]
{
\includegraphics[width=0.45\textwidth]{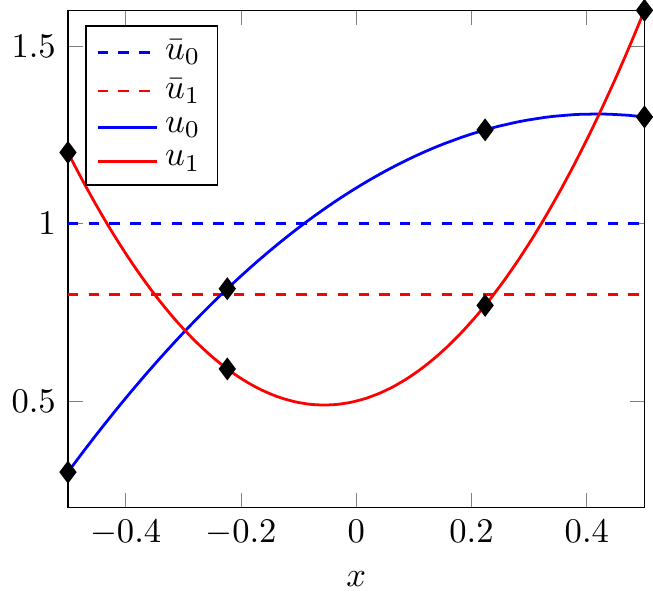}
}
\hspace{0.05\textwidth}
\subfloat[After limiting; realizable.]
{
\includegraphics[width=0.45\textwidth]{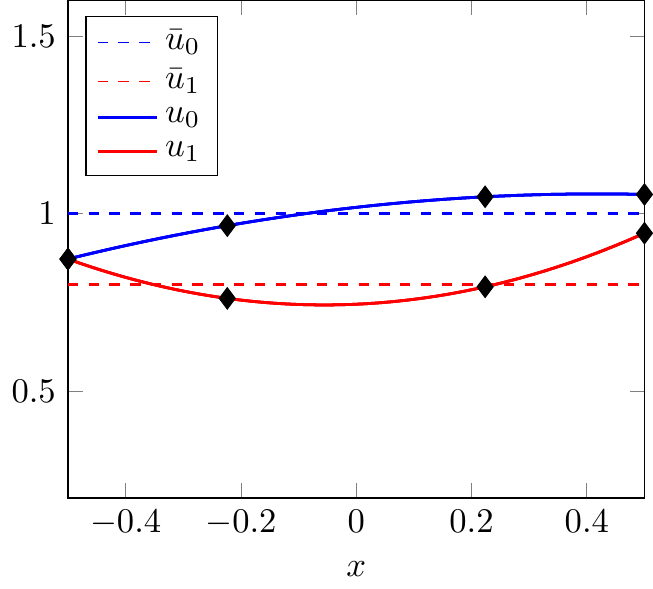}
}
\caption{Application of the realizability preserving limiter to the M${}_1$ 
system for a quadratic polynomial.  Here, $(u_0, u_1)$ are realizable only if 
$|u_1| < u_0$. Black squares indicate the spatial quadrature points $x_q$.}
\label{fig:QuadLimiterEx1}
\end{figure}

Since the full realizable set $\RD{\basis}{}$ is characterized by the 
positive-definiteness of the associated Hankel matrices, computing the smallest 
$\theta$ such that $\U_j^\theta(t, x_{jq}) \in \RD{\basis}{}$ is in general 
difficult.
Furthermore, for higher-dimensional problems (when dimension of the angular
domain is more than one) the realizable set is in general not 
well-understood.

This computation is however easier for the numerically realizable set 
$\RQ{\basis}$ using its \textit{half-space representation}.
This representation is also intriguing because it extends to 
higher-dimensional problems, since even in these cases $\RQ{\basis}$ remains 
the interior of a cone generated by a convex polytope.

In the following
for clarity of exposition we omit the time arguments and 
spatial-cell indices, therefore using $\Ubar$ to indicate the (always 
realizable) moment vector at the cell mean and $\U_q$ to indicate the (not 
necessarily realizable) moment vector at a quadrature point.
In an implementation, the limiter is applied to every quadrature point
in every cell.

To discuss the computation of $\theta$, first we assume that the moment
vectors $\Ubar$ and $\U_q$ have been scaled such that $\max(\bar{u}_0, 
u_{q0}) = 1/2$, where $\bar{u}_0$ and $u_{q0}$ are the zero-th components of
$\Ubar$ and $\U_q$, respectively.%
\footnote{
Here we are using $1/2$ instead of $1$ to ensure that $\Ubar$ and $\U_q$ are 
in the \textit{interior} of $\RQsone{\basis}$.
}
Then the limited moment vector $\U_q^\theta := \theta \Ubar + (1 - \theta) 
\U_q$ satisfies $u^\theta_{q0} < 1$ for all $\theta \in [0, 1]$, and
without loss of generality we can apply the limiter to move the moments into
the bounded set
$$
\RQsone{\basis} = \left\{\U\in \RQ{\basis}~|~ u_0 < 1\right\}.
$$
instead of the full, unbounded set $\RQ{\basis}$.
Now, using that $\RQ{\basis}$ is the cone generate by $\RQone{\basis}$
and Proposition \ref{prop:RQ}, $\RQsone{\basis}$ can be written as the
interior of a convex polytope
$$
\RQsone{\basis} =  \left\{ \lambda \bu : \lambda \in (0, 1) \text{ and }
 \bu \in \RQone{\basis} \right\}
 = \interior \co \left\{0, \basis(\mu_1), \basis(\mu_2), \ldots ,
 \basis(\mu_{\nqmu}) \right\}.
$$
As a convex polytope, it has a half-space representation
\cite{brondsted-convex-polytopes} of the form
$$
 \RQsone{\basis} = \left\{ \U \in \R^{\nmom + 1}:~ \ba_i^T\U < b_i, i \in \{1, 
\ldots, d\} \right\},
$$
where $d$ is the number of facets of the polytope.
For each $i$-th facet, the vectors $\ba_i$ and scalars $b_i$ are computed
from the set of vertices defining the facet.
These sets of vertices can be computed using standard convex-hull algorithms,
and for our implementation we used the Matlab routine
\texttt{convhulln}.
These are fixed throughout the computation and therefore can be precomputed.

Candidate values $\theta_{qi}$ for the $\theta$ which ensures $\U_q^\theta
\in \RQsone{\basis}$ can now be computed for each $i$-th facet by
$$
 \ba_i^T(\theta_{qi} \Ubar + (1 - \theta_{qi}) \U_q) = b_i
 \quad \iff \quad
 \theta_{qi} = \frac{b_i - \ba_i^T\U_q}{\ba_i^T(\Ubar - \U_q)}.
$$
If we wanted to limit exactly to $\partial \RQ{\basis}$, we would first take
$$
\theta_q^{\partial \cR} := \begin{cases}
             0 & \text{if there is no } \theta_{qi} \in [0, 1], \\
             \max\{\theta_{qi} : \theta_{qi} \in [0, 1] \} & \text{else;}
            \end{cases}
$$
and then for the cell in question, we would apply the largest $\theta_q$ from 
the quadrature nodes:
$\theta^{\partial \cR} := \max\{\theta_q^{\partial \cR}\}$.
This would ensure that the moment vectors at each quadrature node in the cell 
are in the realizable set or on its boundary.

In practice, however, we do not want to choose $\theta_q$ such that the limited 
moment vector $\theta_q \Ubar + (1 - \theta_q) \U_q$ lies exactly on the 
boundary of the realizable set $\partial \RQ{\basis}$, but rather so that 
the limited moment vector is in the interior of $\RQ{\basis}$.
Therefore we define a tolerance $\eps$ to add to each relevant $\theta_{qi}$ 
(that is in $[0, 1]$) as well as those facets such that $\theta_{qi} \in [-\eps, 
0]$, indicating that while $\U_q$ is on the correct side of the half space, it 
is closer than $\eps$ to the facet.
Keeping in mind that $\theta_q$ should not exceed one, this gives
$$
\theta_q := \begin{cases}
             0 & \text{if there is no } \theta_{qi} \in [-\eps, 1], \\
             \min \{ 1, \eps + \max\{\theta_{qi} : \theta_{qi} \in [-\eps, 1] \} 
              \} & \text{else.}
            \end{cases}
$$
Finally, in the implemented version, for the cell we set $\theta = \max \{ 
\theta_q \}$.
Figure \ref{fig:QuadLimiter} shows an example for the full moment model with 
$\nmom=2$.

\InsertFig{
\subfloat[Limiter example for $\nmom=2$ full moments $\mbasis$-basis with
$\nqmu = 7$ and, for simplicity of exposition, $u_{q0} = 
\bar{u}_0 = 1$.\label{fig:QuadLimiter}]
{
\includegraphics[width=0.45\textwidth]{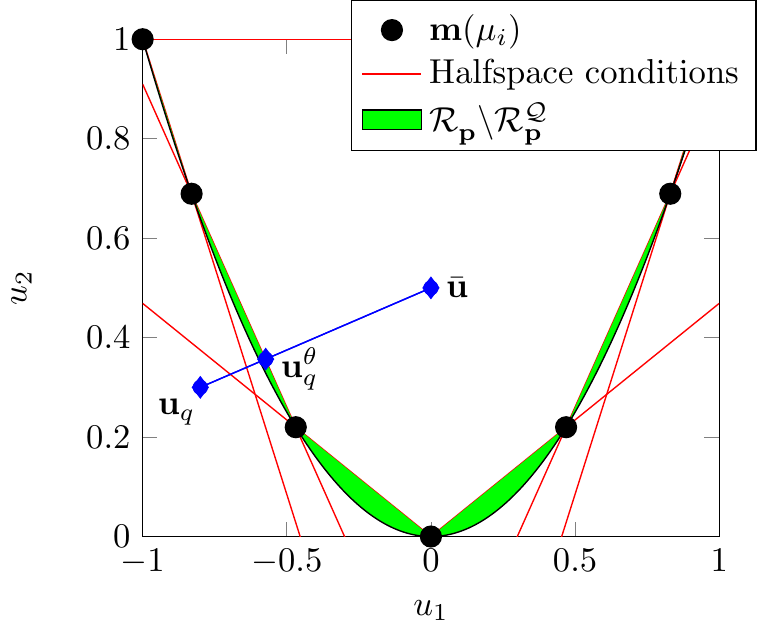}
}%
\hspace{0.05\textwidth}%
\subfloat[The number of facets of $\RQ{\basis}$ for a few $\MN$ (for
$N \in \{4, 6, 8\}$) and $\MMN$ (for $N \in \{2, 3, 4\}$) models.
The number of facets of MM$_2$ and MM$_3$ are almost exactly the
same.
\label{fig:Facets}]
{
\includegraphics[width=0.45\textwidth]{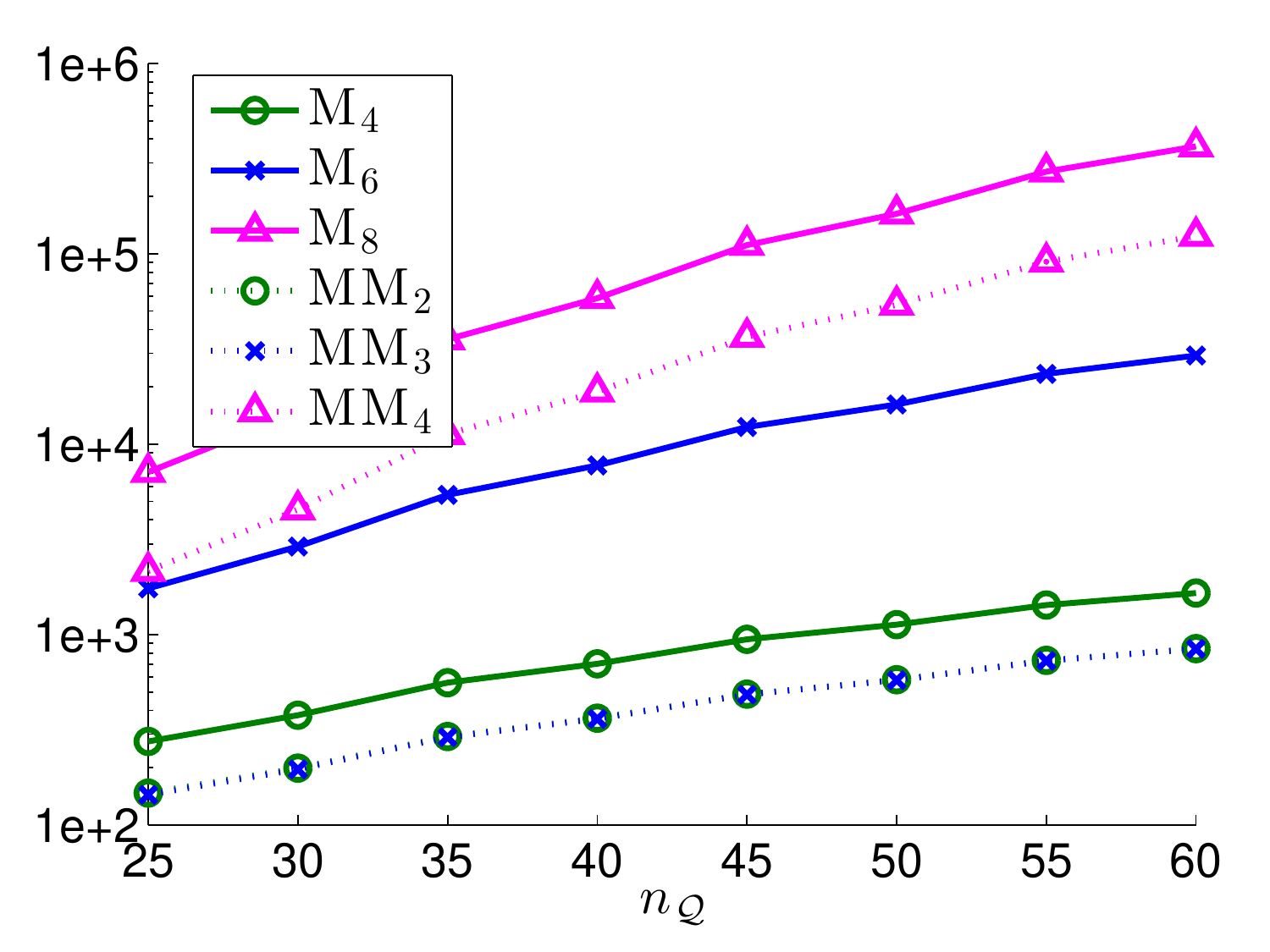}
}

}{}{}

The main drawback to this implementation of the limiter is that, as
illustrated 
in \figref{fig:Facets} the number of facets $d$ grows rapidly both with 
the number of moments $\nmom$ and the number of quadrature points $\nqs$.
The numbers for this figure were computed using results from the study
of convex polytopes, and
a more detailed discussion is in the appendix.
This issue was not a significant obstacle for our simulations but will be in higher dimensions where a much larger number of quadrature 
points is necessary.
One clear speed up is that not every $\theta_{qi}$ should be computed, as many 
facets do not intersect the line between $\Ubar$ and $\U_q$.
More importantly, however, an implementation in higher-dimensions will probably 
have to further approximate $\RQ{\basis}$ by removing some irrelevant facets.

\begin{remark}
\label{rem:NumberOfQuadraturePoints}
There is actually no loss in accuracy by limiting to $\RQ{\basis}$
instead of $\RD{\basis}{}$ when the quadrature $\cQ$ defining $\RQ{\basis}$
is the same as the quadrature used to compute all angular integrals:
In this case we are solving the moment equations
\eqref{eq:moment-closure} with quadrature approximations to the angular
integrals, and the numerical solution indeed lives on $\RQ{\basis}$.
(Furthermore, the function $\bff$ can only be evaluated on $\RQ{\basis}$.)
The numerical solutions using quadrature then converge to moment equations
with exact integrals as the quadrature rule converges.
\end{remark}


\longpaper{
\paragraph{Vertex-based representation}
\comment{Complexity is $\mathcal{O}(n^{3.5})$ if $n$ is the dimension of the
solution vector, using Karmarkas algorithm \cite{hamacher2006linear}}
\comment{OK sounds good, but I'll check with my old advisor anyway.}
For $\nmom$ reasonably big it may be more appropriate to solve the limiter 
problem in the vertex-based representation of the polytope using the original 
definition of realizability \eqref{eq:QRealizable}:
\begin{subequations}
\label{eq:LP}
	\begin{align}
\min\quad&\theta&&&	\min \quad&c^T\nu\\
	s.t.\quad &\sum\limits_{i=1}^{\nqmu}\lambda_i\basis(\mu_i) 
=(1-\theta)\bu+\theta\bar{\bu}&\Longleftrightarrow&&s.t. \quad &A\nu = \bu\\
	&\theta,\lambda_i\geq 0&&&&\nu\geq 0
	\end{align}
	\end{subequations}
	with $\nu = \begin{pmatrix}
	\lambda\\\theta
	\end{pmatrix}$, $c = (0,\ldots,0,1)^T$ and $A = \begin{pmatrix}
	M&\bu-\bar{\bu}
	\end{pmatrix}$ with $M_{ij} = \basis_i(\mu_j)$.
Dualizing this linear program (to avoid the equality constraints) gives 
\cite{hamacher2006linear}

\begin{subequations}
\label{eq:DLP}
\begin{align}
	\max\quad&\omega^T\bu\\
	s.t.\quad &\omega^TA\leq c
	\end{align}
	\end{subequations}

Using the strong duality theorem \cite{hamacher2006linear} we get that for an 
optimal solution of \eqref{eq:DLP} and \eqref{eq:LP} satisfy $\theta = c^T\nu = 
\omega^T\bu$

Therefore solving the dual LP gives immediately the optimal value for $\theta$. 
Note that it is again helpful to choose normalized moments for numerical 
stabilization of the problem. 
\longpaper{
This results in the following algorithm:

\begin{algorithm}[H]
  \SetKwData{Left}{left}\SetKwData{This}{this}\SetKwData{Up}{up}
  \SetKwFunction{Union}{Union}\SetKwFunction{FindCompress}{FindCompress}
  \SetKwInOut{Input}{Input}\SetKwInOut{Output}{Output}
  \Input{$\U = \left(u_0,\ldots,u_{\Nmom-1}\right)\in\R^{\Nmom}$, $\bar{\U} = 
\left(\bar{u}_0,\ldots,\bar{u}_{\Nmom}\right)\in\R^{\Nmom}$ realizable, safety 
factor $\eps$}
  \Output{$\hat{\U}$ realizable}
  \BlankLine
  Set $\phi = \cfrac{\U}{\max(u_0,\bar u_0)}$, $\bar{\phi} = 
\cfrac{\bar{\U}}{\max(u_0,\bar u_0)}$\;
  Set $c = (0,\ldots,0,1)^T$ and $A = \begin{pmatrix}
  	M&\phi-\bar{\phi}
  	\end{pmatrix}$ with $M_{ij} = \basis_i(\mu_j)$\;
  Solve \eqref{eq:DLP} for $y$\;
 \If{$\theta>0$}{
   $\theta = \min(\theta+\eps,1)$\;}
  Set $\hat{\U} = \left(\theta \bar{\U} + (1-\theta)\U\right)$\;
  \caption{DLP-Limiter for quadrature based algorithms.}\label{alg:DLPLimiter}
\end{algorithm} 
}{}

Here we only have a polynomial number of variables in this linear program but 
we can't precompute the solution efficiently. However, for large $\nmom$ this 
is still more efficient than the facet-based limiter.
\comment{That is true. But it is clear that there is an $N$ where this
intersects. I bet that this is indeed relevant in higher dimension.}
\comment{Sure, but if we're going to include this section, I still think we 
should include some kind of experiments or discussion about whether this 
intersection point is reasonably close.  And anyway, I don't think this will be 
hard to do.}
\comment{I agree.}
}{}

\subsection{Slope limiting}
Although the realizability limiter plays some role in dampening spurious
oscillations in numerical solutions, further dampening is needed
\cite{Olbrant2012}.
We use the standard TVBM corrected minmod limiter proposed in
\cite{CockburnShuPk}.

Assuming that the major part of the spurious oscillations are 
generated in the linear part of the underlying polynomial, whose slope in
the $j$-th cell is 
simply $\Uhat_j^1$, a basic limiter can be defined as
\begin{align*}
 \Lambda^{\text{scalar}}_j(\Uhat) = \left\{ 
  \begin{array}{cc}
   \left( \begin{array}{c}
           \left( \Uhat^0_j \right)^T \\
           \left( D_j(\Uhat) \right)^T \\
           (0, 0, \ldots , 0)
          \end{array} \right)
    &\text{ if }
    \begin{array}{c}
     \abs{\Uhat_j^1} \geq M(\dx_j)^2 \text{ and,} \\
     D_j(\Uhat) \ne \Uhat_j^1
    \end{array} \\
    \Uhat_j & \text{otherwise}
  \end{array} \right.
\end{align*}
for the $j$-th cell and the case $\order = 2$, that is piecewise quadratic
approximations, so that the final row of zeros in the first case indicates
that the coefficients for the quadratic basis functions are set to zero for
each moment component.
The absolute value and inequality are applied component-wise, and
\begin{align*}
 D_j(\Uhat) = D(\Uhat_{j - 1}, \Uhat_j, \Uhat_{j + 1}) := 
  m(\Uhat_j^1, \Uhat_{j + 1}^0 - \Uhat_j^0, 
  \Uhat_j^0 - \Uhat_{j - 1}^0).
\end{align*}
The label ``scalar'' is used because the limiter is directly applied to each 
scalar component of $\U_h$.
The function $m$ is the standard minmod function applied component-wise:
\begin{align*}
m(a_1,a_2,a_3) &= \begin{cases}
 \operatorname{sign}(a_1) \min\{|a_1|,|a_2|,|a_3|\} & \text{if } 
 \operatorname{sign}(a_1) = \operatorname{sign}(a_2) = 
 \operatorname{sign}(a_3), \\
0 & \text{else}.
\end{cases}
\end{align*}

The constant $M$ is a problem-dependent estimate of the second derivative, 
though we note that in \cite{CockburnShuPk} the authors did not find the 
solutions very sensitive to the value chosen for this parameter.

However, it has been found that applying the limiter to the components 
themselves may introduce non-physical oscillations around an otherwise monotonic 
solution \cite{Cockburn1989}.
Therefore we instead apply the limiter to the local characteristic fields of the 
solution.
The characteristic fields are found by transforming the moment vector $\U$ 
using the matrix $\bV_j$, whose columns hold the eigenvectors of the Jacobian 
$\partial \bff / \partial \U$ evaluated at the cell mean $\Ubar_j$.
We then transform back to the moment variables after applying the limiter.
In the end, since $\Uhat_j$ is a matrix of size $(\order + 1) \times (\nmom + 
1)$, this transformation is accomplished by post-multiplying with $\bV_j^{-T}$ 
so that
$$
 \Lambda_j(\Uhat) = \Lambda^{\text{scalar}}_j(\Uhat \bV_j^{-T}) 
\bV_j^T,
$$
where $\Uhat \bV^{-T}_j$ is understood as $(\ldots , \Uhat_{j - 1} \bV^{-T}_j, 
\Uhat_j \bV^{-T}_j, \Uhat_{j + 1} \bV^{-T}_j, \ldots)$.
We apply this limiter to every Runge-Kutta stage.

The Jacobian is computed at the cell means $\Ubar_j$ using 
\eqref{eq:fluxJacobian}.
This indeed also implies that we must solve the dual problem \eqref{eq:dual} to 
compute $\alphahat(\Ubar_j)$ for each cell.
For cases when the matrix $\bV_j$ has a condition number (defined using the
two-norm) greater than a tolerance $\kappa_{\text{jac}}$, we apply the
scalar limiter.

There are more advanced limiter strategies, for example WENO limiting 
\cite{Qiu2005,Zhao2013} or generalized minmod-limiting 
\cite{Krivodonova2007}, removing the drawback of having a 
problem-dependent parameter $M$.
The slope limiter is however not a focus of this work.
\section{Numerical results}
\label{sec:SIM}
We used the following parameter values:

\begin{tabular}{r @{$\:$} c @{$\:$} l l}
  $\tau$ &=& $10^{-9}$ & Optimization gradient tolerance, \\
  $\{ r_m \}$ &=& $\{ 0, 10^{-8}, 10^{-6}, 10^{-4}, 10^{-3} \}$
   & Outer regularization loop in optimizer, \\
  $k_r$ &=& $50$ & Number of optimization iterations before \\
   &&& advancing outer regularization loop, \\
  $\nqmu$ &=& $40$ & Number of angular quadrature points, \\
  $M$ &=& $50$ & Slope-limiter constant, value suggested \\
   &&& in \cite{CockburnShuPk}, \\
  $\eps$ &=& $10^{-14}$ & Realizability limiter tolerance, \\
  $\kappa_{\text{jac}}$ &=& $10^{5}$ & Condition-number tolerance for applying
   the characteristic \\
   &&& transformation in the slope limiter.
\end{tabular}

For the angular quadrature we used $(\nqmu / 2)$-point Gauss-Lobatto rules
over both $\mu \in [-1, 0]$ and $\mu \in [0, 1]$.
At the first time step, the initial multipliers for the optimizer
(recall \secref{sec:Optimization}) are those of the isotropic density with
the appropriate zeroth-order moment.
For the rest of the simulation, the initial multipliers are those from the
same point in space at the previous time step.

To set the time step we use condition \eqref{eq:HO-CFL} with equality.

\subsection{Convergence tests}

We numerically test the convergence of the scheme two ways:
First, we use the method of manufactured solutions on the total scheme, and
second we focus on the convergence of the spatial reconstructions using the
realizability limiter near the boundary of realizability.

\subsubsection{Manufactured solution}
\label{sec:manu-soln}
In general, analytical solutions for minimum-entropy models are not known.
Therefore, to test the convergence and efficiency of our scheme, we use the
method of manufactured solutions.
To avoid the effects of the boundary we use periodic boundary conditions,
and we set the spatial domain to $X = (-\pi, \pi)$.

We begin by defining a kinetic density in the form of the entropy ansatz
and which is periodic in space for every $t$:
\begin{subequations}
\label{eq:MFSM3}
\begin{align}
 \phi(t, x, \mu) =& \exp(\alpha_0(t, x) + \alpha_1(t, x) \mu ), \\
 \alpha_0(t, x) =& -K - \sin(x - t) + c_0 t - c_1,\\
 \alpha_1(t, x) =&  K + \sin(x - t).
\end{align}
\end{subequations}
A source term is defined by applying the transport operator to $\phi$:
$$
\source(t, x, \mu) := \partial_t \phi(t, x, \mu) + \mu \partial_x
\phi(t, x, \mu).
$$
Thus by inserting this $\source$ into \eqref{eq:FokkerPlanck1D} (and taking
$\sig{a} = \sig{s} = 0$)
we have that $\phi$ is, by construction, a solution of
\eqref{eq:FokkerPlanck1D}.

A tedious but straightforward computation shows that choosing $c_0 = 4$ gives
$\source \ge 0$, which means that \thmref{prop:eulerStepRealizable} will
apply to the resulting moment system (for any $K$).
Furthermore we take
\begin{align*}
 c_1 &= c_0 \tf - K + 1 - \log\left( \cfrac{K - 1}{2\sinh(K - 1)} \right)
\end{align*}
so that the maximum value of $\vint{\phi}$ for $(t, x) \in [0, \tf] \times X$
is one.
The parameter $K$ can be increased to make $\phi$ look increasingly
like a Dirac delta at $\mu = 1$.

Since our solution has the form of an entropy ansatz,
$\bv = \vint{\basis \phi}$ is also a solution of \eqref{eq:moment-closure}
whenever $1$ and $\mu$ are in the linear span of the basis functions
$\basis$.
Clearly this holds for the M$_{\nmom}$ and MM$_{\nmom}$ models for
$\nmom \ge 1$.
Notice also that $\bv$ approaches the boundary of realizability as $K$ is
increased.

We used the final time $\tf = \pi / 5$ and
chose $K = 55$, for which the maximum value of $u_1 / u_0$
is about $0.98$ (recall that $|u_1 / u_0| < 1$ is necessary for
realizability).
In the following, we used the M$_3$ model so that our results included the
effects of the numerical optimization.

We compute errors in the zero-th moment of the solution, which we denote
$v_0(t, x) = \vint{\phi(t, x, \cdot)}$.
Then $L^1$ and $L^\infty$ errors for the zero-th moment $u_{0,h}(t, x)$
(that is, the zero-th component of a numerical solution $\U_h$) are
defined as
\begin{equation}
 E^1_h = \int_X \left| v_0(\tf, x) - u_{0,h}(\tf, x) \right| dx
  \quand
 E^\infty_h = \max_{x \in X} \left| v_0(\tf, x) - u_{0,h}(\tf, x) \right|,
\label{eq:errors}
\end{equation}
respectively.
We approximate the integral in $E^1_h$ using a 100-point Gauss-Lobatto
quadrature rule over each spatial cell $I_j$, and $E^\infty_h$ is
approximated by taking the maximum over these quadrature nodes.
The observed convergence order $\nu$ is defined by
\begin{equation}
 \frac{E^p_{h1}}{E^p_{h2}} = \left( \frac{\dx_1}{\dx_2} \right)^\nu
\label{eq:conv-order}
\end{equation}
where for $i \in \{1, 2\}$, $E^p_{hi}$ is the error $E^p_h$ for the
numerical solution using cell size $\dx_i$, for $p \in \{1, \infty\}$.

A convergence table is presented in \tabref{tab:MFSM3} using a tight
gradient tolerance in the optimization of $\tau = 10^{-11}$.
We observe that the expected convergence rates are achieved both in $L^1$-
and $L^\infty$-errors, although for $\order = 0$, the solution has only
just begun to reach the convergent regime.
In \figref{fig:EfficiencyMFSM3} we plot the $L^\infty$-error versus the
computation time for the solution for the same value of $\tau$.
Here we clearly see that higher-order methods are more efficient.


The scheme is not convergent for arbitrarily large values of $K$.
For large values of $K$, the numerical solution will veer so close to the
boundary of the realizable set that the optimization will have to use
regularization, thus introducing errors into the solution.
This was observed in \cite{ahot2013}, though here we can display this
effect more precisely.
In \figref{fig:reg-kills}, we show the results using $K = 110$ for three
spatial discretizations.
In the most coarse discretization ($\ncells = 40$), regularization is never
necessary, but after doubling the number of cells, a few regularizations
are used and their effects can be seen in the figure.
Here, the optimizer regularizes four problems with $r = 10^{-8}$, around
$x = -1.76,\,-1.68,\,0.51,\,0.74$ at $t = 0.38,\,0.44,\,0.35,\,0.52$, respectively, and the effect
on the error spreads, mostly (to the right, the propagation direction of the solution), and magnifies slightly until the final time.
The observed convergence order for $E^1_h$ from $\ncells = 40$ to
$80$ is $\nu = 2.9$ while the corresponding $E_h^\infty$ order is $\nu = 2.1$.
When doubling the number of cells again (up to $\ncells = 160$), the
optimizer must regularize now only three problems with $r = 10^{-8}$, this time around
$x = -1.74,\,-1.66,\,1.04$ at $t = 0.56,\,0.51,\,0.44$, respectively.
Indeed, immediately from the figure one can see that convergence in the
$E^\infty_h$ error has stopped and the observed convergence rate in
$E^1_h$ from $\ncells = 80$ to $160$ is only $\nu = 1.6$.

\begin{table}
\centering
\begin{tabular}{r r@{.}l c r@{.}l c r@{.}l c}

 & \multicolumn{3}{c}{$\order = 0$} & \multicolumn{3}{c}{$\order = 1$} &
   \multicolumn{3}{c}{$\order = 2$}\\
\cmidrule(r){2-4} \cmidrule(r){5-7} \cmidrule(r){8-10}
$\ncells$ & \multicolumn{2}{c}{$E^1_h$} & $\nu$
 & \multicolumn{2}{c}{$E^1_h$} & $\nu$
 & \multicolumn{2}{c}{$E^1_h$} & $\nu$ \\ \midrule

 20&4&087e+00&---&3&524e-02&---&1&897e-05&---\\
 40&2&608e+00&0.6&9&532e-03&1.9&2&416e-06&3.0\\
 80&1&507e+00&0.8&2&482e-03&1.9&3&049e-07&3.0\\
160&8&161e-01&0.9&6&333e-04&2.0&3&828e-08&3.0\\
320&4&256e-01&0.9&1&600e-04&2.0&4&796e-09&3.0\\
640&2&174e-01&1.0&4&020e-05&2.0&6&072e-10&3.0\\ \midrule

 & \multicolumn{2}{c}{} & & \multicolumn{2}{c}{} & &
   \multicolumn{2}{c}{} & \\

 & \multicolumn{2}{c}{$E^\infty_h$} & $\nu$
 & \multicolumn{2}{c}{$E^\infty_h$} & $\nu$
 & \multicolumn{2}{c}{$E^\infty_h$} & $\nu$ \\ \midrule

 20&3&339e-01&---&3&164e-03&---&9&283e-06&---\\
 40&2&129e-01&0.6&8&543e-04&1.9&1&241e-06&2.9\\
 80&1&230e-01&0.8&2&222e-04&1.9&1&609e-07&2.9\\
160&6&662e-02&0.9&5&666e-05&2.0&2&048e-08&3.0\\
320&3&474e-02&0.9&1&431e-05&2.0&2&589e-09&3.0\\
640&1&775e-02&1.0&3&595e-06&2.0&3&365e-10&2.9\\
\end{tabular}
\caption{$L^1$- and $L^\infty$-errors and observed convergence order $\nu$
for the manufactured solution \eqref{eq:MFSM3} with optimization gradient
tolerance $\tau = 10^{-11}$.}
\label{tab:MFSM3}
\end{table}


\begin{figure}
\centering
\subfloat[$L^\infty$-Efficiency, $\tau = 10^{-11}$.  Here we used the same
   numbers of cells as in \tabref{tab:MFSM3}.]
{
\includegraphics[width=0.45\textwidth]{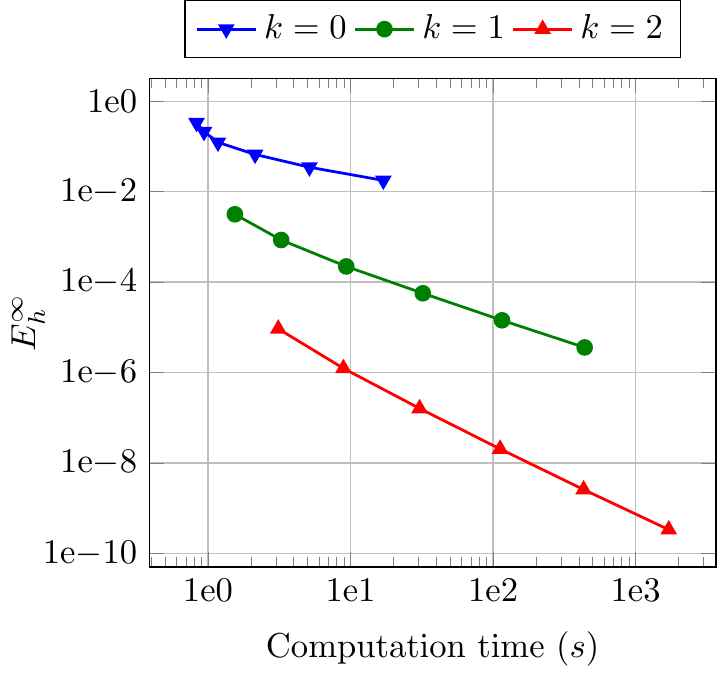}
\label{fig:EfficiencyMFSM3}
}
\hspace{0.05\textwidth}
\subfloat[Regularization destroying convergence for $K=110$.]
{\includegraphics[width=0.45\textwidth]{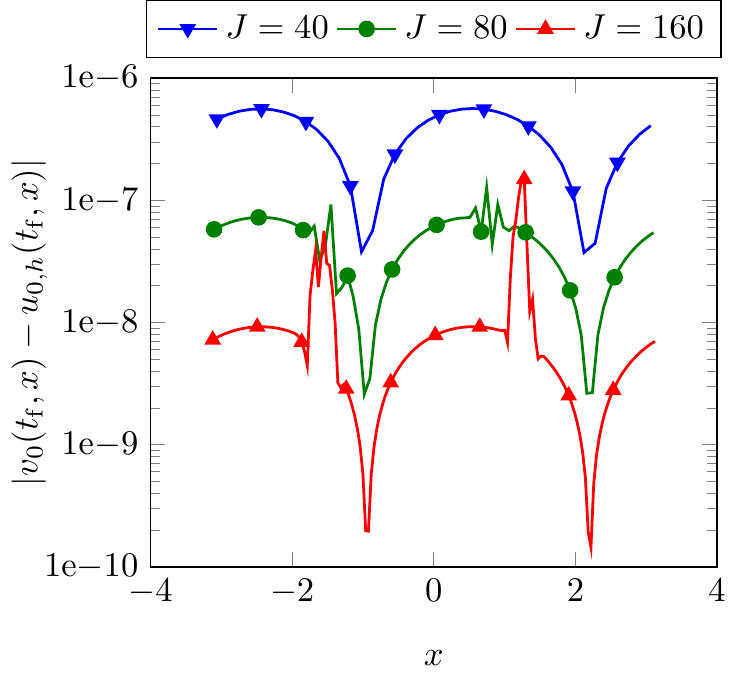}
\label{fig:reg-kills}}
\caption{Using the manufactured solution to consider the efficiency of
  higher-order methods and the effects of regularization.}
\label{fig:QuadLimiterEx1}
\end{figure}

\subsubsection{Convergence of the limited spatial reconstructions}
\label{sec:limiter-tests}

Despite choosing a manufactured solution which lies close to the boundary of
realizability, the realizability limiter was never active in any of our
simulations in \secref{sec:manu-soln}.
Therefore in this section we artificially define a curve of moment vectors
in space, reconstruct this curve in the finite-element space $V_h^2$ of
discontinuous quadratic polynomials (recall \eqref{eq:dg-space}), and apply
the limiter to move the reconstruction back into the set of numerically
realizable moments.
We then measure the convergence of this limited reconstruction.%
\footnote{
We would like to thank an anonymous reviewer for suggesting such a test.
}

For simplicity, we only consider the monomial basis $\basis = \mbasis$.
Using the Dirac delta function $\delta = \delta(\mu)$, we first choose two
moment vectors $\U_0$ and $\U_1$ which lie arbitrarily close to
the boundary of the numerically realizable set:
\begin{align*}
 \U_0 &:= (1 - \gamma) \Vint{\mbasis \delta(\mu - \mu_0)} + \gamma \uIso
  = (1 - \gamma) \mbasis(\mu_0) + \gamma \uIso \\
 \U_1 &:= 10^{-8} \left( (1 - \gamma) \Vint{\mbasis \delta(\mu + 1)}
  + \gamma \uIso \right)
  = 10^{-8} \left( (1 - \gamma) \mbasis(-1) + \gamma \uIso \right)
\end{align*}
where $\mu_0 \approx 0.5403$, which is a node from the angular quadrature
we use (see the beginning of \secref{sec:SIM}),
$\uIso$ are the moments of the isotropic density (see \eqref{eq:uIso}),
and $\gamma \in [0, 1]$ controls the distance to the boundary.
For $N > 1$, both $\U_0$ and $\U_1$ lie on the boundary of the realizable set
when $\gamma = 0$.  Since the Dirac delta functions are placed at
nodes in the angular quadrature, $\U_0$ and $\U_1$ (and any convex
combination thereof) are in $\RQ{\mbasis}$ for $\gamma \in (0, 1]$, and
so we define a curve of moments in space by taking convex
combinations of $\U_0$ and $\U_1$:
\begin{equation}
 \U(x) := (1 - \lambda(x)) \U_0 + \lambda(x) \U_1, \quad x \in [-1, 1],
\label{eq:u-limit-test}
\end{equation}
where $\lambda(x) \in [0, 1]$ is given by
$$
\lambda(x) := \frac{\cos(\pi x) + 1}{2}, \quad x \in [-1, 1].
$$

To perform the convergence test, we project $\U(x)$ onto $V_h^2$ for
increasing numbers of cells $\ncells$ and then apply the realizability
limiter.
Errors and observed convergence order are computed as in \eqref{eq:errors} and
\eqref{eq:conv-order}, respectively, over the finest grid of the tests, here
$\ncells = 512$ cells.

We found that taking $\gamma \in [10^{-10}, 10^{-2}]$ places the moment curve
$\U(x)$ close enough to the boundary of realizability that the realizability
limiter was active for every number of cells we considered.
In \tabref{tab:limiter-test-10} we show convergence rates for the smallest
$\gamma$ in this range.
These results show the desired third-order convergence.
In this table we include the column $\theta_{\max}$, which gives the
maximum value of $\theta$ from the realizability limiter over all spatial
cells.
That $\theta_{\max}$ is nonzero in each row indicates that the
realizability limiter was active for every reconstruction.
We observed similar results for every moment component and moment curves of
any order $N \in \{2, 3, 4, 5\}$.

\begin{table}
\centering
\begin{tabular}{r r@{.}l c r@{.}l c r@{.}l c r@{.}l c r@{.}l}

 & \multicolumn{6}{c}{$u_0$} & \multicolumn{6}{c}{$u_1$}
 & \multicolumn{2}{c}{ } \\
\cmidrule(r){2-7} \cmidrule(r){8-13}
$\ncells$ & \multicolumn{2}{c}{$E^1_h$} & $\nu$
 & \multicolumn{2}{c}{$E^\infty_h$} & $\nu$
 & \multicolumn{2}{c}{$E^1_h$} & $\nu$
 & \multicolumn{2}{c}{$E^\infty_h$} & $\nu$
 & \multicolumn{2}{c}{$\theta_{\max}$} \\ \midrule
 
  8 & 1&072e-03 & ---  & 1&848e-03 & ---  & 5&790e-04 & ---  & 9&983e-04 & ---  & 1&56e-02 \\
 16 & 1&130e-04 & 3.2 & 2&469e-04 & 2.9 & 6&108e-05 & 3.2 & 1&334e-04 & 2.9 & 4&27e-03 \\
 32 & 1&342e-05 & 3.1 & 3&137e-05 & 3.0 & 7&253e-06 & 3.1 & 1&695e-05 & 3.0 & 1&39e-03 \\
 64 & 1&655e-06 & 3.0 & 3&937e-06 & 3.0 & 8&943e-07 & 3.0 & 2&127e-06 & 3.0 & 6&61e-04 \\
128 & 2&060e-07 & 3.0 & 4&927e-07 & 3.0 & 1&113e-07 & 3.0 & 2&662e-07 & 3.0 & 4&48e-04 \\
256 & 2&564e-08 & 3.0 & 6&160e-08 & 3.0 & 1&385e-08 & 3.0 & 3&328e-08 & 3.0 & 2&51e-04 \\
512 & 3&188e-09 & 3.0 & 7&700e-09 & 3.0 & 1&723e-09 & 3.0 & 4&160e-09 & 3.0 & 3&76e-06

\end{tabular}
\caption{$L^1$- and $L^\infty$-errors and observed convergence order $\nu$
for the first two components of the realizability-limited, piecewise
quadratic reconstruction of $\U(x)$ from \eqref{eq:u-limit-test} with
$\gamma = 10^{-10}$ and moment order $N = 4$.}
\label{tab:limiter-test-10}
\end{table}

However, it has been remarked in \cite{Zhang2012a} that in some pathological
situations this limiter may reduce accuracy to second order.
We can demonstrate this by pushing $\U(x)$ closer to the boundary of
realizability by setting $\gamma = 10^{-11}$.
These results are given in \tabref{tab:limiter-test-11}, where
in the $L^1$ error we still see third-order convergence, but in the
$L^\infty$ error the convergence has degraded to second order.

In further tests, which for brevity we omit here,
we observed similar behavior as we decrease $\gamma$ until we arrive at
$10^{-14}$, which is the value of the tolerance in the realizability limiter
(see the beginning of \secref{sec:SIM}).
At this point convergence is only first order as expected.

\begin{table}
\centering
\begin{tabular}{r r@{.}l c r@{.}l c r@{.}l c r@{.}l c r@{.}l}

 & \multicolumn{6}{c}{$u_0$} & \multicolumn{6}{c}{$u_1$}
 & \multicolumn{2}{c}{ } \\
\cmidrule(r){2-7} \cmidrule(r){8-13}
$\ncells$ & \multicolumn{2}{c}{$E^1_h$} & $\nu$
 & \multicolumn{2}{c}{$E^\infty_h$} & $\nu$
 & \multicolumn{2}{c}{$E^1_h$} & $\nu$
 & \multicolumn{2}{c}{$E^\infty_h$} & $\nu$
 & \multicolumn{2}{c}{$\theta_{\max}$} \\ \midrule
 
  8 & 1&308e-03 & ---  & 1&848e-03 & ---  & 7&066e-04 & ---  & 9&983e-04 & ---  & 1&93e-02 \\
 16 & 1&224e-04 & 3.4 & 2&469e-04 & 2.9 & 6&614e-05 & 3.4 & 1&334e-04 & 2.9 & 8&11e-03 \\
 32 & 1&460e-05 & 3.1 & 3&137e-05 & 3.0 & 7&889e-06 & 3.1 & 1&695e-05 & 3.0 & 5&23e-03 \\
 64 & 1&805e-06 & 3.0 & 7&105e-06 & 2.1 & 9&754e-07 & 3.0 & 3&839e-06 & 2.1 & 4&55e-03 \\
128 & 2&247e-07 & 3.0 & 1&721e-06 & 2.0 & 1&214e-07 & 3.0 & 9&299e-07 & 2.0 & 4&32e-03 \\
256 & 2&799e-08 & 3.0 & 4&162e-07 & 2.0 & 1&512e-08 & 3.0 & 2&248e-07 & 2.0 & 4&15e-03 \\
512 & 3&457e-09 & 3.0 & 8&958e-08 & 2.2 & 1&868e-09 & 3.0 & 4&840e-08 & 2.2 & 3&57e-03

\end{tabular}
\caption{$L^1$- and $L^\infty$-errors and observed convergence order $\nu$
for the first two components of the realizability-limited, piecewise
quadratic reconstruction of $\U(x)$ from \eqref{eq:u-limit-test} with
$\gamma = 10^{-11}$ and moment order $N = 4$.}
\label{tab:limiter-test-11}
\end{table}

That the limiter works so close to the boundary of realizability is
satisfactory to us:
When moment vectors as close as $10^{-10}$ to the boundary of realizability
appear in a simulation, errors from the numerical optimization are likely to
affect convergence before these effects of the realizability limiter play a
role.

\subsection{Plane source}
In this test case we start with an isotropic distribution where the initial 
mass is concentrated in the middle of an infinite domain $x \in
(-\infty, \infty)$:
\begin{align*}
 \psiInit(x, \mu) = \psi_{\rm floor} + \delta(x)
\end{align*}
where the small parameter $\psi_{\rm floor} = 0.5 \times 10^{-8}$ is used to
model a vacuum.%
\footnote{
A vacuum is not exactly realizable by the entropy ansatz \eqref{eq:psiME}
for the Maxwell-Boltzmann entropy.%
}
In practice, a bounded domain must be used, so we choose a domain large
enough that the boundary should have only negligible effects on the
solution:
thus for our final time $\tf = 1$, we take $X = [\xL, \xR] = [-1.2, 1.2]$.
At the boundary we set
\begin{align*}
 \psiL(t, \mu) \equiv \psi_{\rm floor} \quand
 \psiR(t, \mu) \equiv \psi_{\rm floor}
\end{align*}
We use isotropic scattering and no absorption, therefore
$\collision{\psi} = \frac12 \vint{\psi} - \psi$,
$\sig{s} = 1$ and $\sig{a} = 0$.

We approximate the delta function by using an even number of spatial cells
and splitting the delta into the cells immediately to the left and right
of $x = 0$.
This is then projected into $V_h^k$ using \eqref{eq:dweakform1a}.%
\footnote{
We refer the reader to \cite{yang-shu-2013} for a thorough discussion of using
discontinuous Galerkin methods for problems with delta functions in the
initial data.
}
All solutions here are computed with $\ncells = 300$ cells and
spatial polynomials of degree $\order = 2$.

\figref{fig:PlanesourceCuts} presents solutions for different 
moment models.
This figure includes a reference solution, which we computed using our
scheme for the P$_{99}$ model with $\ncells = 2000$ cells and spatial order
$k = 0$.
The oscillations we see have been observed before and arise due to the
fact that we are using moment models (which are indeed spectral methods)
on a non-smooth problem.
Indeed, our M$_{\nmom}$ results agree well with those in
\cite{Hauck2010}.
The full- and mixed-moment models look largely similar for the same numbers
of degrees of freedom, with the notable differences being around
$x = 0$.
Here the mixed-moment solutions are much more sharply peaked while the
full-moment solutions are more flat and wider.
The discrepancy in magnitude, however, seems to decrease as $\nmom$
increases, which agrees with the expectation that both methods
are converging as $\nmom \to \infty$.

\begin{figure}[htbp!]
\centering
 \subfloat[Full-moment models.]{
  \includegraphics[width=0.45\textwidth]{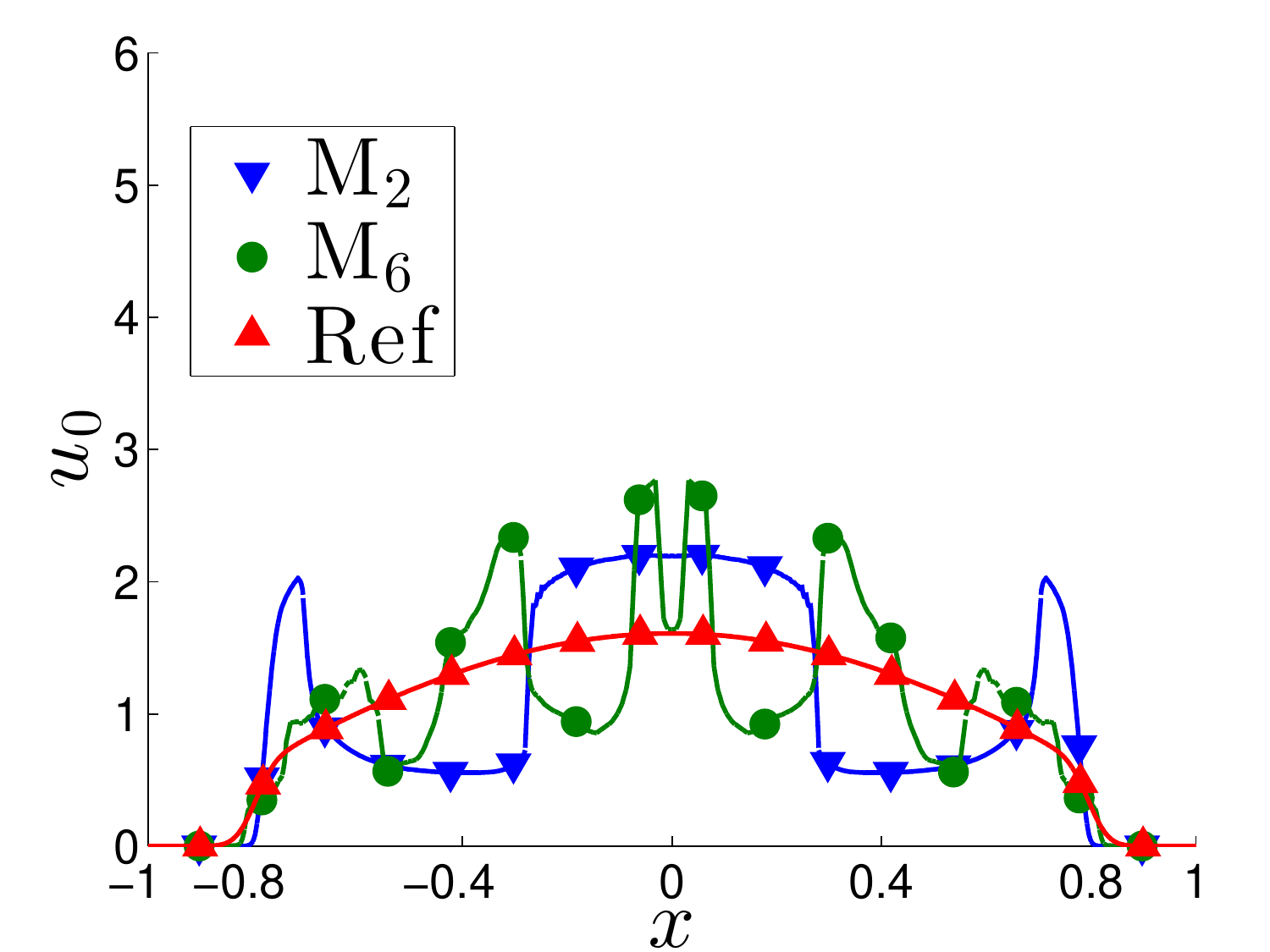}
  }
    \hfill
 \subfloat[Mixed-moment models.]{
  \includegraphics[width=0.45\textwidth]{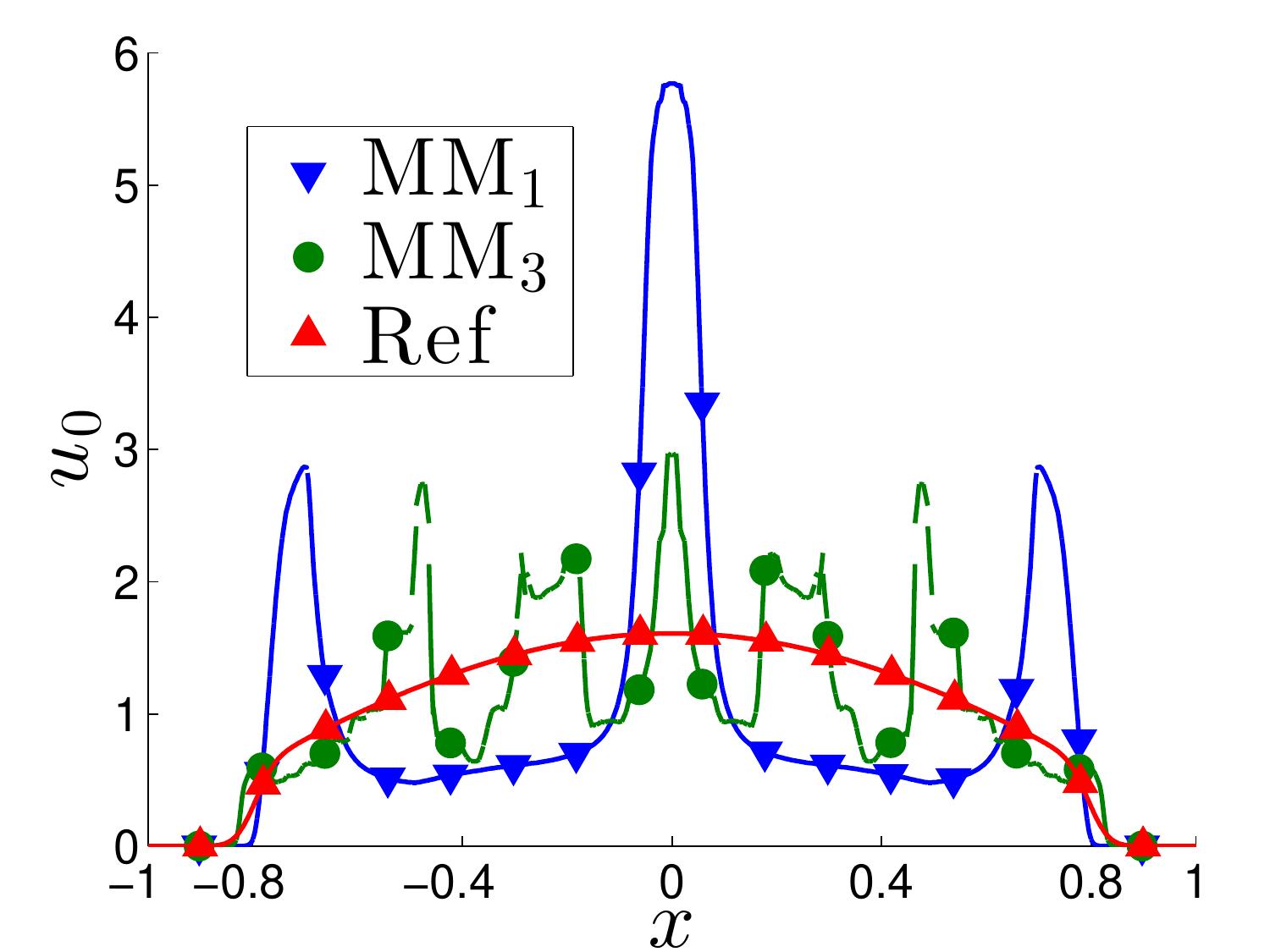}
  }
\caption{Local density $u_0$ for different models and a reference solution
at $t = 0.8$ in the plane-source problem.}
\label{fig:PlanesourceCuts}
\end{figure}

\FloatBarrier
\figref{fig:PlanesourceTheta} shows the activity of the realizability
limiter.
The realizability limiter is most active along the fronts where
particles from the initial impulse are first entering the domain,
which is as expected, because this is where the moment vectors of the
solution lie closest to the boundary of realizability \cite{AllHau12}.
We also see that as the number of moments increases, the activity
of the realizability limiter increases as well.
This is also not surprising, since realizability conditions typically require
tighter and
tighter bounds on the moment components as their order increases.
Thus numerical errors of a similar size will have an increasingly large
chance of pushing the solution out of the realizable set.
We did not observe significant differences between the full-moment and
mixed-moment models.

\begin{figure}[htbp!]
\centering
 \subfloat[M$_3$]
  {\includegraphics[width=0.45\textwidth]{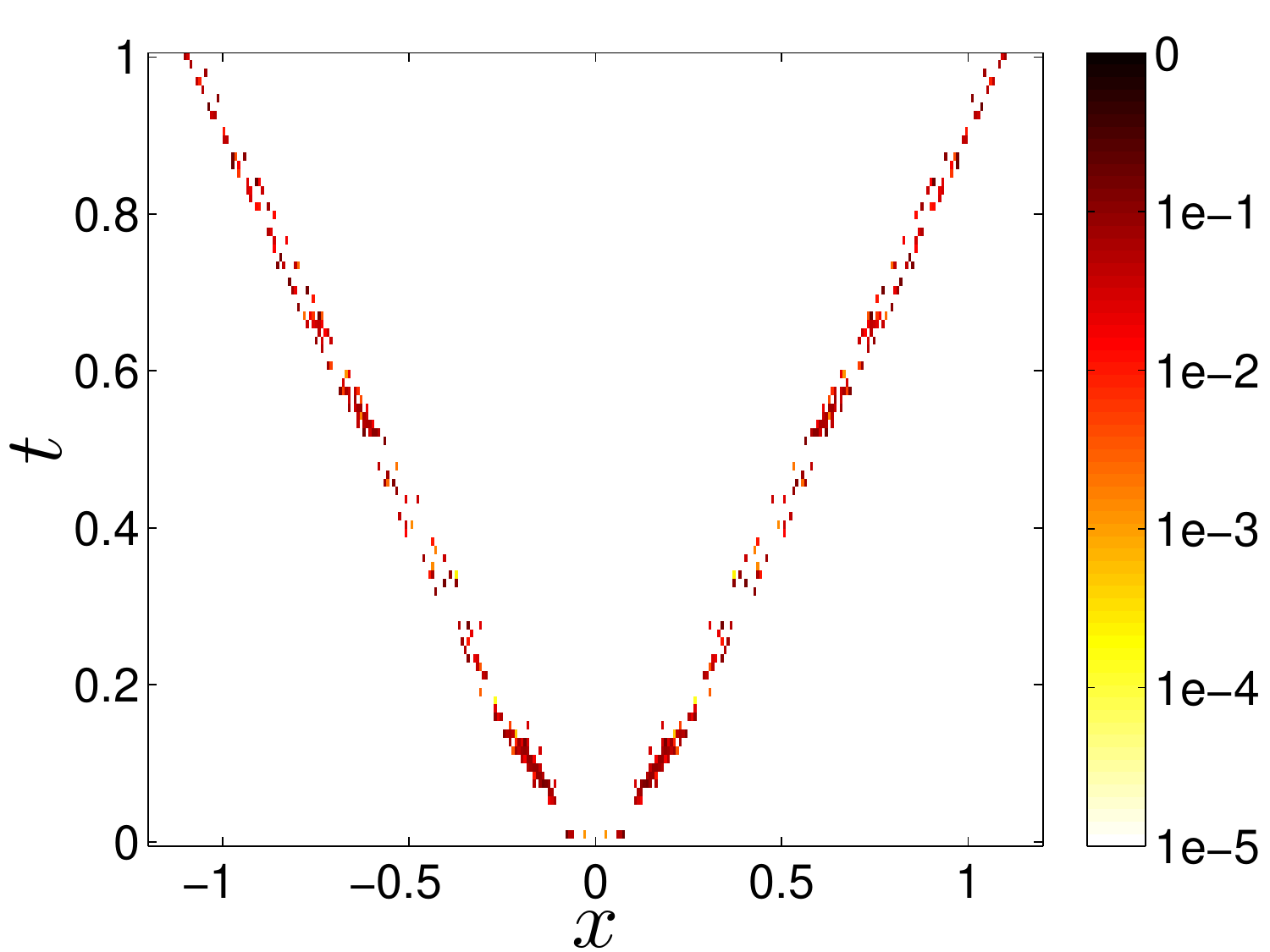}}
  \hfill
 \subfloat[M$_5$]
  {\includegraphics[width=0.45\textwidth]{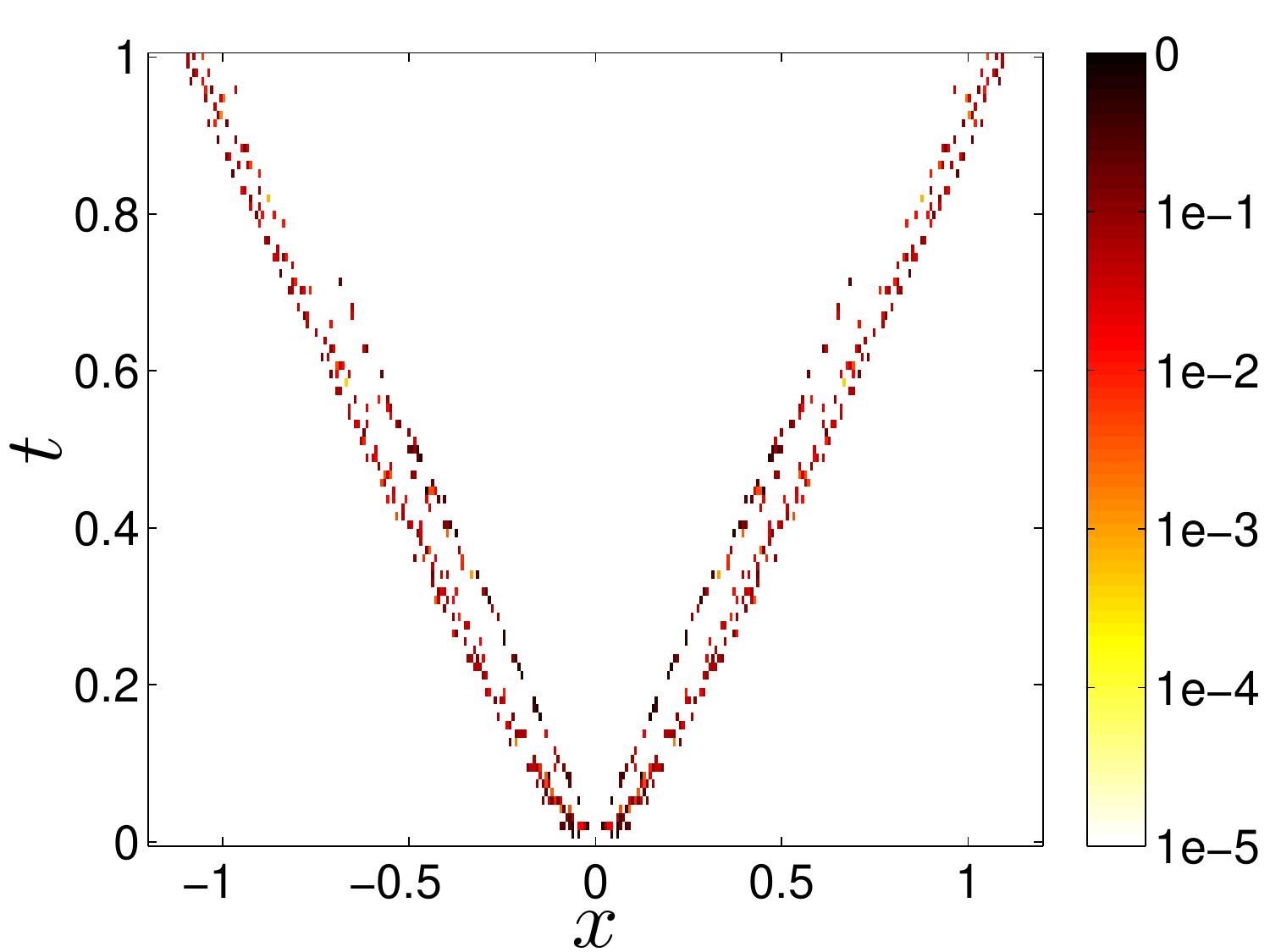}}
\caption{The value of $\theta$ in the realizability limiter for two models
of the plane-source problem.
Note that we choose a logarithmic scale so that even small values of
$\theta$ are noticeable.}
\label{fig:PlanesourceTheta}
\end{figure}

\subsection{Two beams}
This test case models two beams entering a (nearly) empty absorbing medium.
This is a classical test problem used to illustrate the shortcomings of
the M$_1$ model, whose steady-state solution for this problem has a
nonphysical shock.
This test case is also challenging for the numerical optimization
\cite{AllHau12}.

The domain is $X = (-0.5, 0.5)$, and
the beams are specified by boundary conditions
$$
 \psiL(t, \mu) = \frac{1}{\Sigma} \exp \left( -\frac{(\mu - 1)^2}
  {2\Sigma^2} \right)
  \quand
 \psiR(t, \mu) = \frac{1}{\Sigma} \exp \left( -\frac{(\mu + 1)^2}
  {2\Sigma^2} \right),
$$
with $\Sigma = 50$.
The initial condition again approximates a vacuum similarly as before:
\begin{align*}
 \psi(0, x, \mu) = \psi_{\rm floor} = 0.5 \times 10^{-8}.
\end{align*}
Finally, we use absorption parameter $\sigma_a = 2$ and no scattering, 
$\sigma_s = 0$.
Again, all solutions here are computed with $\ncells = 300$ cells and spatial
polynomials of degree $\order = 2$.

Our numerical solutions for the local density $u_0$ all look qualitatively
the same---with the notable exception of the M$_1$ model---and match the
true steady-state solution, so in \figref{fig:beams-cuts-MM2} we only
present one example solution for the unfamiliar reader.
We do note, however, that our steady-state solutions for the full-moment
models do not contain the shocks observed in \cite{Hauck2010}.
This difference is apparently due to the fact that we use sharply
forward-peaked boundary conditions, where as isotropic boundary conditions
were used in \cite{Hauck2010}.
As shown in \figref{fig:beams-cuts-MM2}, the mixed-moment solutions do not
appear to contain any steady-state shocks either.

In \figref{fig:beams-cuts-M4} we present a zoomed-in view of the
oscillations present in a typical transient solution.
These oscillations become more noticeable for higher-order models
(M$_{\nmom}$ for $\nmom \ge 4$ and MM$_{\nmom}$ for $\nmom \ge 3$).
It seems clear to us that these are numerical artifacts, and we
believe they would be mitigated with a more sophisticated slope
limiter.

\begin{figure}[htbp!]
\centering
 \subfloat[MM$_2$]
  {\includegraphics[width=0.45\textwidth]{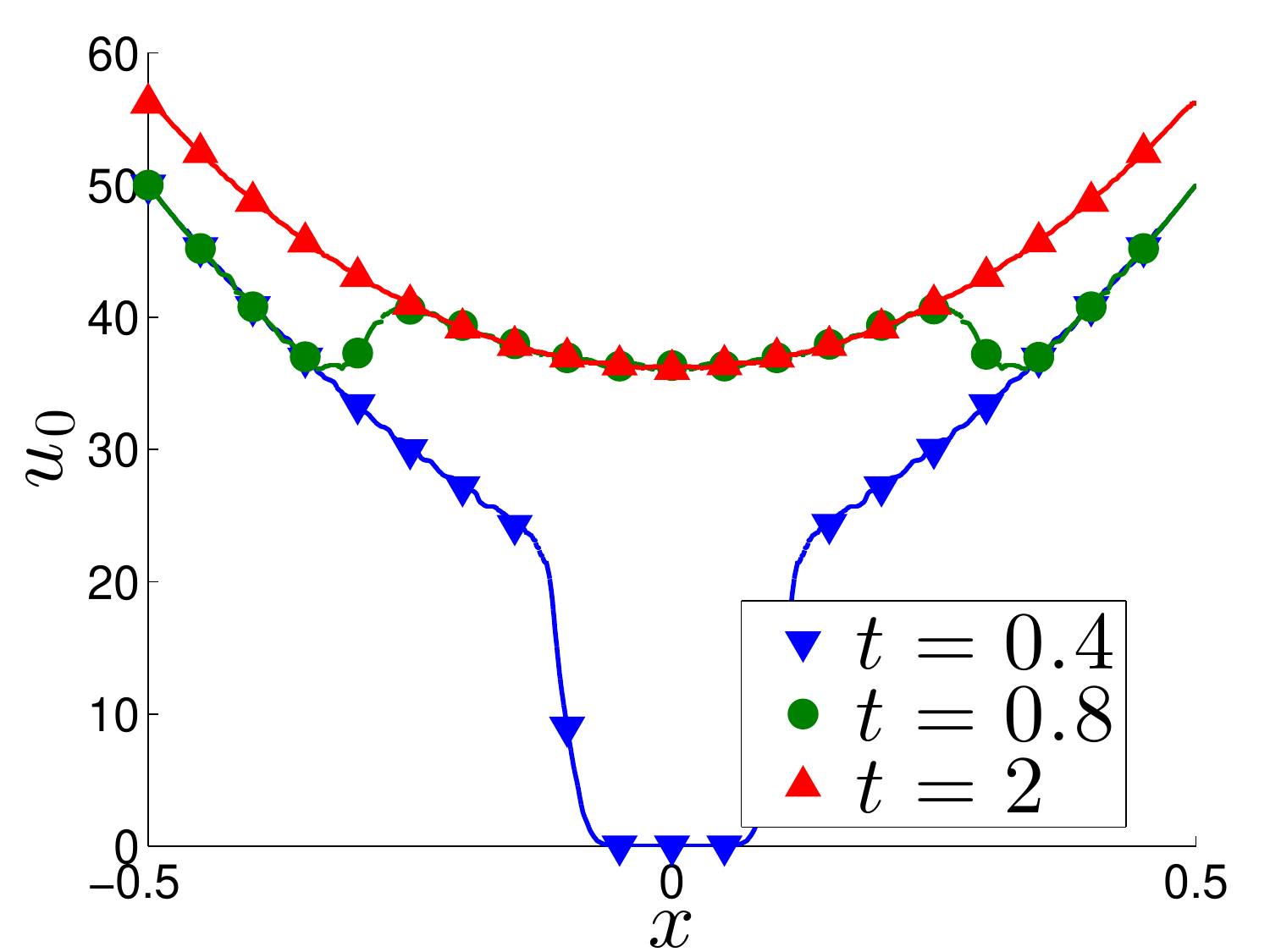}
   \label{fig:beams-cuts-MM2}}
 \hfill
 \subfloat[The M$_4$ model at $t = 0.8$.]
  {\includegraphics[width=0.45\textwidth]{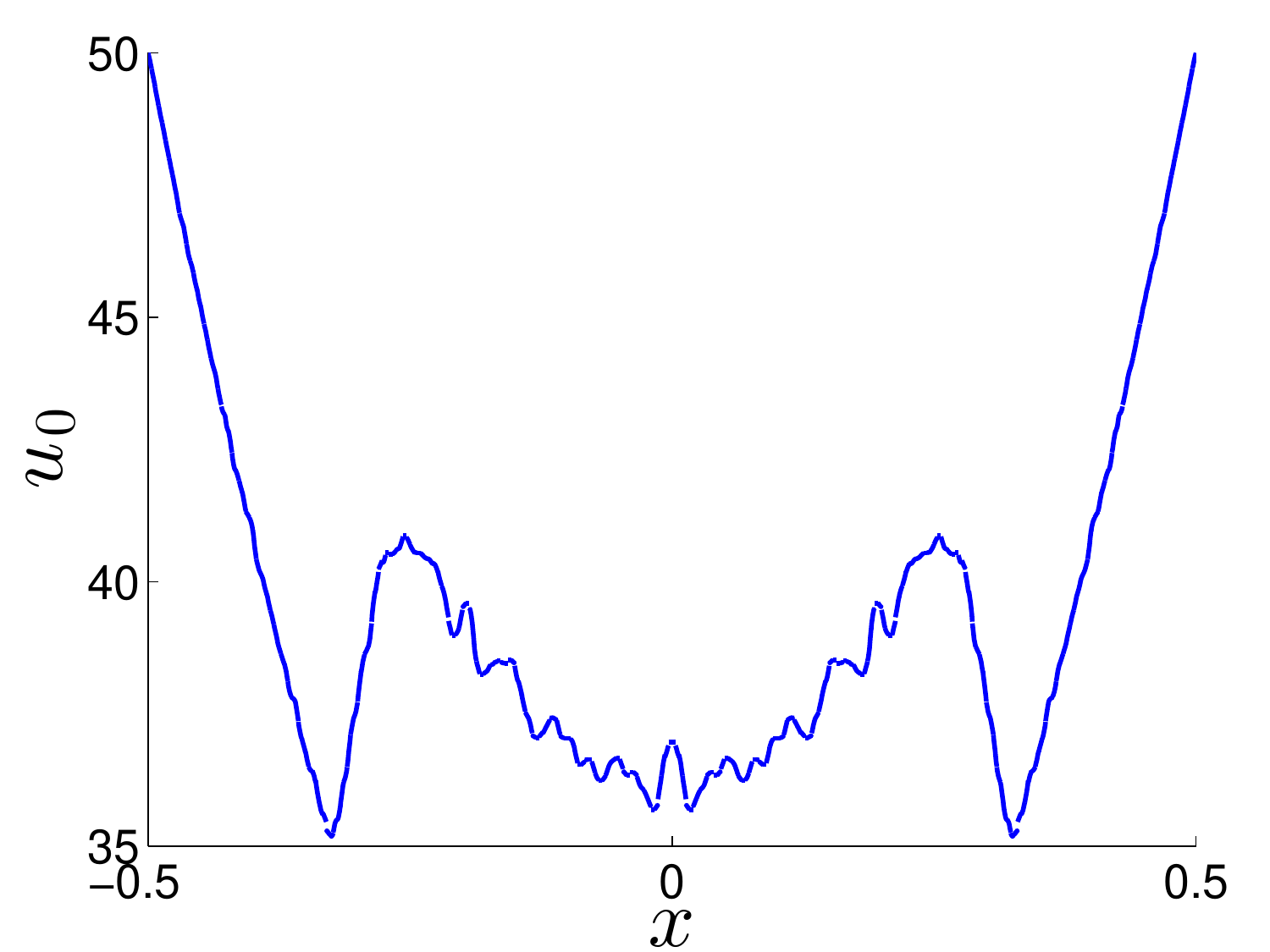}
   \label{fig:beams-cuts-M4}}
\caption{Local density $u_0$ for different models at $t = 0.8$ in the
two-beam problem.}
\label{fig:BeamsCuts}
\end{figure}
 
The activity of the realizability limiter again increases with the number
of moments, but in this problem we see differences between full- and
mixed-moment models.
\figref{fig:BeamsTheta} illustrates this difference.
The reason for this difference is not yet clear to us, but it seems to
indicate that the mixed-moment model is converging more slowly to
steady state.

\begin{figure}[htbp!]
\centering
 \subfloat[M$_4$]
  {\includegraphics[width=0.45\textwidth]{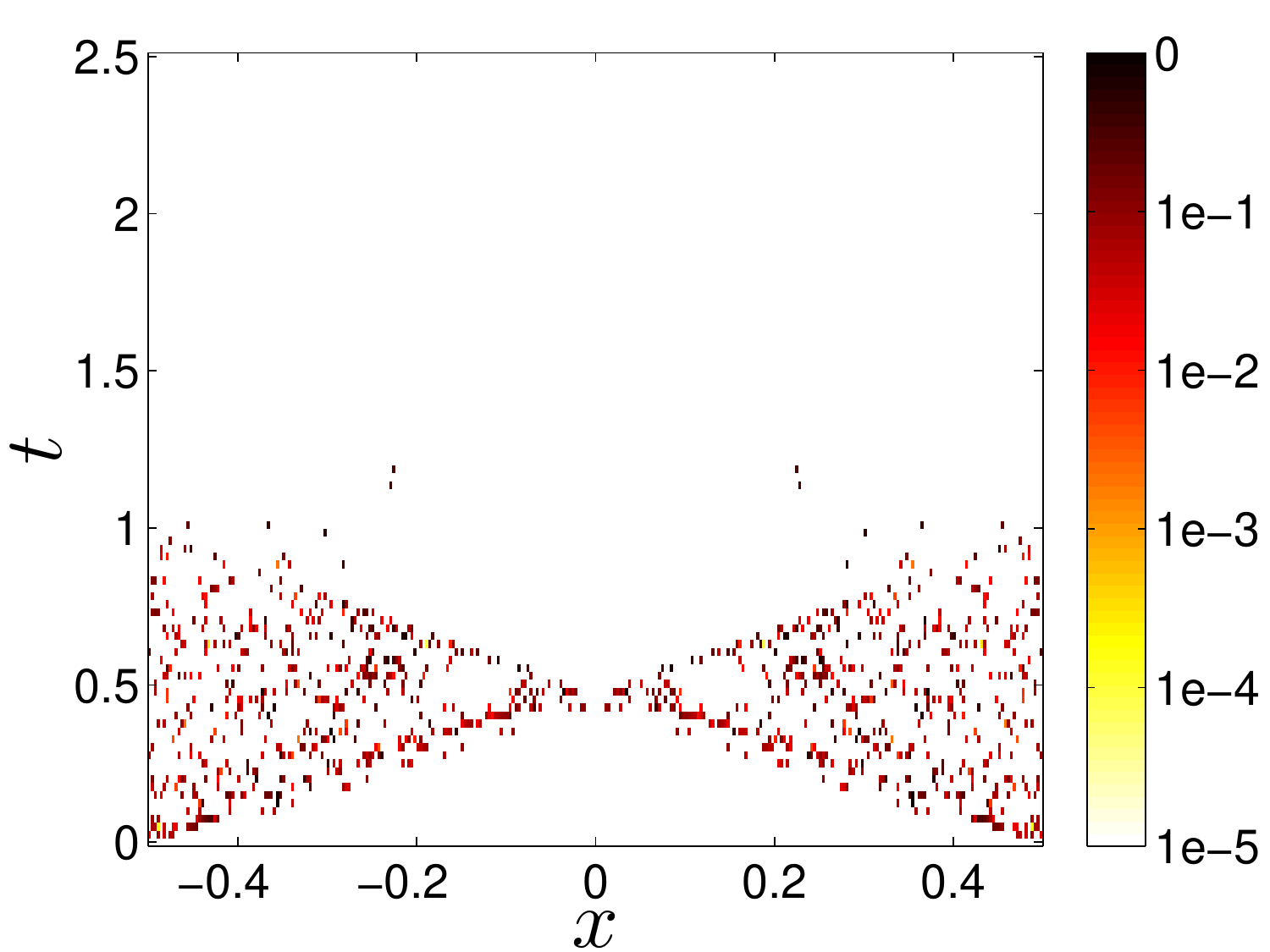}}
 \hfill
 \subfloat[MM$_2$]
  {\includegraphics[width=0.45\textwidth]{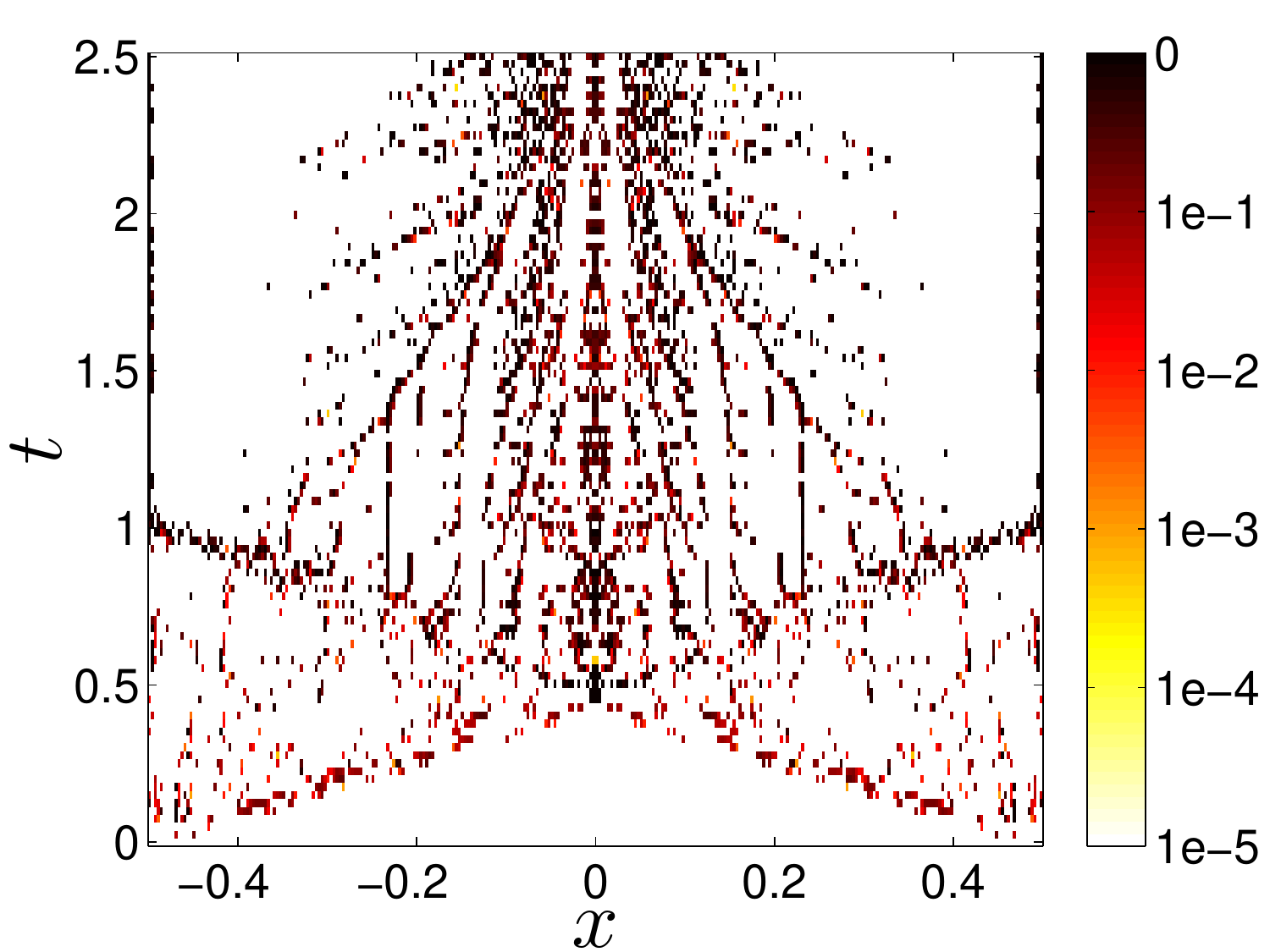}}
\caption{The value of $\theta$ in the realizability limiter for two models
of the two-beam problem.
Note that we choose a logarithmic scale so that even small values of
$\theta$ are noticeable.}
\label{fig:BeamsTheta}
\end{figure}

\section{Conclusions and outlook}
\label{sec:Conclusions}
We presented a high-order Runge-Kutta discontinuous Galerkin scheme for
minimum-entropy moment models of linear kinetic equations in one space
dimension.
The key issue for higher-order methods for minimum-entropy moment models is
that the numerical solution typically leaves the set of realizable moments,
even though standard techniques can be used to show that the cell means of the
solution remain realizable.
We address this problem using a realizability limiter inspired by the
positivity-preserving limiter used in \cite{Zhang2010} for the Euler equations.
Such a limiter requires the computation of the intersection of a line in
moment space with the boundary of the realizable set, a set which typically
has nonlinear boundaries.
We are able to approximate this intersection by replacing the true
realizable set with its quadrature-based approximation, which is a
convex polytope.
This quadrature-based approximation is intriguing because it is a convex
polytope for any moment order and any dimension of the angular domain
indicating that our techniques could be extended to these cases.

We constructed a new manufactured solution whose source term is realizable, thus
allowing us to consider target solutions closer to the boundary of the
realizable set.
These tests show that our scheme converges
as expected and that higher-order schemes are more efficient.
We also present numerical solutions for standard benchmark problems, where we
are able to compare full- and mixed-moment models.

Future work should focus on a parallelized implementation for two- and
three-dimensional problems.
Theoretically, implementation of the quadrature-based realizability limiter
requires no change because the convex polytopic structure of $\RQ{\basis}$
holds in any dimension.
Practically speaking, however the main challenge is that the number of facets
grows quickly with the number of moments and number of quadrature points, both
of which will be higher.
Further work in higher-order methods will also have to consider new methods
for time integration, as here we relied heavily on the SSP property, which
is not possible past fourth-order.
Relatedly, at least partially implicit time integrators should be investigated,
particularly in the context of constructing an asymptotic preserving numerical
method for the moment system.


\appendix



\section{The number of facets in $\RQone{\mbasis}$ and $\RQone{\mmbasis}$}

Even with some speed-ups in the computation of the facet-intersections and possibly
approximations by removing facets, the number of facets plays a large role in
determining the complexity of finding the intersection of a ray with
the boundary of the
convex polytope $\RQsone{\basis}$.
In this section we mention how some results from the study of convex polytopes
give the exact number of facets in the full-moment case, and then we compare this
with an upper bound of the number of facets in the mixed-moment case.

First, some notational remarks for this section:
For convenience, we work with the closures $\RQblOne$ and $\RQbOne$
of $\RQsone{\basis}$ and $\RQone{\basis}$ respectively.
When working with $\RQbOne$, we consider it as a subset of $\bbR^N$ (or $\bbR^{2N}$
in the mixed-moment case), and use the notation $\bu_1$ and $\basis_1$ to indicate
the final $N$ (or $2N$) entries of $\bu$ and $\basis$ respectively.
We also often work with the matrix form of the half-space representation,
so for example $\RQbOne = \{ \bu_1 : A \bu_1 \le b \}$, for a matrix $A \in
\bbR^{d \times N}$ with rows $\{\ba^T_i\}_{i = 1}^d$ and a vector $b \in \bbR^d$.
Finally, we omit proofs in this section because we consider the arguments needed
to be unenlightening and relatively straightforward.

We first note that the number of facets of $\RQblOne$ is only one more than that of
$\RQbOne$:
\begin{proposition}
If $A$ and $b$ define a half-space representation of $\RQbOne$, then a half-space
representation of $\RQblOne$ is:
$$
 \RQblOneBig = \left\{ \bu = \begin{pmatrix} u_0 \\ \bu_1 \end{pmatrix}
  : \begin{pmatrix} 1 & 0 \\ -b & A \end{pmatrix}
  \begin{pmatrix} u_0 \\ \bu_1 \end{pmatrix}
  \le \begin{pmatrix} 1 \\ 0 \end{pmatrix}
  \right\}.
$$
\end{proposition}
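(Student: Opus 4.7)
The plan is to multiply out the proposed block matrix inequality, which yields the two scalar constraints $u_0 \le 1$ and $A \bu_1 \le b u_0$, and then establish set equality with $\RQblOne$ by two inclusions, leveraging that $\RQ{\basis}$ is the convex cone generated by $\RQone{\basis}$ (Proposition \ref{prop:RQ}).

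For the inclusion of $\RQblOne$ into the proposed set, I would take any $\bu = (u_0, \bu_1^T)^T \in \RQblOne$. By Proposition \ref{prop:RQ} combined with the cone structure,
$$\overline{\RQ{\basis}} = \{u_0 (1, \bv_1^T)^T : u_0 \ge 0,\ \bv_1 \in \RQbOne\},$$
so $u_0 \ge 0$, and when $u_0 > 0$ I set $\bv_1 := \bu_1/u_0 \in \RQbOne$, whence $A(\bu_1/u_0) \le b$, i.e., $A \bu_1 \le b u_0$. The bound $u_0 \le 1$ is by definition of $\RQblOne$. The degenerate case $u_0 = 0$ forces $\bu_1 = \mathbf{0}$ by boundedness of $\RQbOne$, so both constraints hold trivially.

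For the reverse inclusion, I would assume $u_0 \le 1$ and $A \bu_1 \le b u_0$ and first establish $u_0 \ge 0$. Then the scaling is inverted: for $u_0 > 0$ the hypothesis gives $\bu_1/u_0 \in \RQbOne$, so $\bu = u_0 (1, (\bu_1/u_0)^T)^T \in \overline{\RQ{\basis}}$, and combined with $u_0 \le 1$ this places $\bu \in \RQblOne$; for $u_0 = 0$, the recession-cone identity $\{\bu_1 : A \bu_1 \le \mathbf{0}\} = \{\mathbf{0}\}$, a consequence of $\RQbOne$ being bounded, forces $\bu_1 = \mathbf{0}$, so $\bu = \mathbf{0} \in \RQblOne$.

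The main obstacle is the nonnegativity of $u_0$, since the proposed representation does not list $u_0 \ge 0$ explicitly. The argument would combine two ingredients: first, boundedness of $\RQbOne$ makes its recession cone trivial; second, since $\RQone{\basis}$ has nonempty relative interior, the vector $b$ is componentwise nonnegative with at least one strictly positive entry (each $b_i = 0$ corresponds to a facet whose supporting hyperplane passes through $\mathbf{0}$). Assuming $u_0 < 0$ would then force $A \bu_1 \le u_0 b$ with at least one strictly negative upper bound, placing $\bu_1$ outside the (trivial) recession cone and yielding a contradiction. Everything else reduces to the standard homogenization of a bounded polytope intersected with the slab $u_0 \le 1$.
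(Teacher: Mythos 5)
The paper itself omits the proof of this proposition (the appendix explicitly states that the arguments are ``unenlightening and relatively straightforward''), so there is no official argument to compare against. Your homogenization strategy---expand the block inequality into $u_0 \le 1$ and $A\bu_1 \le b\,u_0$, rescale by $u_0$ to reduce to membership in $\RQbOne$, and handle the apex $u_0=0$ via the trivial recession cone of a bounded polytope---is surely the intended route, and both inclusions are set up correctly.

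There is, however, one genuine gap: the claim that $b$ is componentwise nonnegative. Nonemptiness of the relative interior of $\RQone{\basis}$ does not imply this; after the usual normalization, $b \ge 0$ is equivalent to the origin of $\bbR^{N}$ lying in $\RQbOne$, which fails in general. Concretely, for the monomial basis with $N=2$ and quadrature nodes $\{\pm 1, \pm 1/2\}$ (no node at $\mu=0$), $\RQbOne$ is the trapezoid with vertices $(\pm 1, 1)$ and $(\pm 1/2, 1/4)$, whose bottom facet is $-u_2 \le -1/4$, so the corresponding $b_i = -1/4 < 0$. Your argument for ruling out $u_0 < 0$ therefore does not go through as written. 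The conclusion is still true, and the fix is short: if $u_0 < 0$ and $A\bu_1 \le u_0 b$, pick any $\bv_1 \in \RQbOne$; since $-u_0 > 0$ we get $A(\bu_1 - u_0 \bv_1) \le u_0 b - u_0 b = 0$, so $\bu_1 - u_0\bv_1$ lies in the recession cone of $\RQbOne$, which is $\{0\}$ by boundedness, forcing $\bu_1 = u_0 \bv_1$. As $\bv_1 \in \RQbOne$ was arbitrary, $\RQbOne$ would have to be the singleton $\{\bu_1/u_0\}$, contradicting the fact that it contains more than one point (it is a full-dimensional polytope with $\nqmu > N$ distinct vertices). With that replacement the rest of your proof, including the $u_0 = 0$ case, is sound. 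Incidentally, in the paper's actual implementation the quadrature does contain $\mu = 0$, so $0 \in \RQbOne$ and your $b \ge 0$ claim happens to hold there---but the proposition is stated for an arbitrary half-space representation, and the fact that the subsequent mixed-moment proposition must \emph{assume} $b_{\pm} \ge 0$ explicitly underlines that it is not automatic.
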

Therefore in the sequel we focus on the number of facets of $\RQbOne$.

First we consider the full-moment case.
The convex polytope $\RQmOne \subset \bbR^N$ 
is known as the \textit{cyclic polytope} and plays a special role in the
study of convex polytopes.
The Upper Bound Theorem states that for a given number of vertices in a given
dimension, the cyclic polytope has the maximum number of facets
\cite{brondsted-convex-polytopes}.
Gale's evenness condition or
the Dehn-Sommerville equations can be used to show that the number of
facets is
$$
 C(\nmom, \nqmu) = \begin{pmatrix}\nqmu  - \floor{\frac12 (\nmom + 1)} \\
  \nqmu - \nmom \end{pmatrix}
  + \begin{pmatrix} \nqmu - \floor{\frac12 (\nmom + 2)} \\ \nqmu - \nmom \end{pmatrix}
$$
for $\nqmu > \nmom > 1$ \cite{brondsted-convex-polytopes}, where
$\floor{\cdot}$ indicates the integer part of its argument.
We note that this holds for any choice of distinct quadrature nodes
$\{ \mu_i \}$.
Since $\RQmOne$ has $C(\nmom, \nqmu)$
facets, there exists a half-space representation such that $A \in
\bbR^{C(\nmom,
\nqmu) \times \nmom}$ and $b \in \bbR^{C(\nmom, \nqmu)}$.
Unpacking the definition of the binomial coefficient we can see that for
fixed, even $N$, we have $C(\nmom, \nqmu) = \cO(\nqmu^{\nmom/2})$, and for
fixed, odd $\nmom$ we
have $C(\nmom, \nqmu) = \cO(\nqmu^{(\nmom - 1)/2})$.

One can see from \figref{fig:Facets} that $\MMN$ models appear always to have
fewer facets than the corresponding $\MN$ models.
To show that this holds more generally, we first need
a half-space representation for $\RQmmOne$.
This representation can be derived using the half-space representations
from the full-moment case.

\begin{proposition}
Let $A_\pm$ and $b_\pm$ define half-space representations for the convex polytopes
formed by the basis functions on the positive and negative subintervals
respectively:
\begin{align*}
 \co \left\{\mbasis_1(\mu_i) \right \}_{\mu_i \ge 0}
  &= \left\{ \bu_{1+} : A_+ \bu_{1+} \le b_+ \right\}, \\
 \co \left\{\mbasis_1(\mu_i) \right \}_{\mu_i \le 0}
  &= \left\{ \bu_{1-} : A_- \bu_{1-} \le b_- \right\}.
\end{align*}
We assume $b_\pm \ge 0$ component-wise.%
\footnote{%
Here we are using the fact that the subintervals for the mixed-moments are joined
exactly at $\mu = 0$ and assume furthermore that this point is a quadrature node.
This is indeed a reasonable assumption, since even in MM$_1$, a delta function can
form at $\mu = 0$.
}
Then a half-space representation for $\RQmmOne$ is given by
$$
 \RQmmOneBig = \left\{ \bu_1 = \begin{pmatrix}\bu_{1+} \\ \bu_{1-} \end{pmatrix}
  : A \bu_1 \le b \right\},
$$
where
$$
 A = \begin{pmatrix} A_+ & 0 \\ 0 & A_- \\
  \vdots & \vdots \\
  b^{-1}_{+i} \ba_{+i}^T & b^{-1}_{-j} \ba_{-j}^T \\
  \vdots & \vdots
  \end{pmatrix}
  \quand
  b = \begin{pmatrix} b_+ \\ b_- \\ \vdots \\ 1 \\ \vdots \end{pmatrix};
$$
where the last rows of the $A$ include only those pairs $\{ i, j \}$ such that
neither $b_{+i}$ nor $b_{-j}$ is equal to zero.
\end{proposition}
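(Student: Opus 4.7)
The plan is to prove the two-way inclusion corresponding to the claimed half-space representation. In the forward direction, given $\bu_1 \in \RQmmOne$ with representing nonnegative weights $f_i$ satisfying $\sum_i w_i f_i = 1$, I would split the mass by the sign of $\mu_i$: set $\lambda_+ := \sum_{\mu_i > 0} w_i f_i$ and $\lambda_- := \sum_{\mu_i < 0} w_i f_i$, so that $\lambda_+ + \lambda_- \le 1$ with the deficit carried by the assumed quadrature node at $\mu = 0$. Because $\mmbasis_1$ has zeros in its $\mu_-$ entries when $\mu_i > 0$ (and vice versa), it follows that $\bu_{1+} = \sum_{\mu_i > 0} w_i f_i \mbasis_1(\mu_i)$ and $\bu_{1-} = \sum_{\mu_i < 0} w_i f_i \mbasis_1(\mu_i)$, so $\bu_{1\pm}/\lambda_\pm$ (when $\lambda_\pm > 0$) lies in the corresponding half-interval convex hull, giving $A_\pm \bu_{1\pm} \le \lambda_\pm b_\pm$. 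Combining $\lambda_\pm \le 1$ with the standing hypothesis $b_\pm \ge 0$ then recovers the first two block rows $A_\pm \bu_{1\pm} \le b_\pm$. For the cross block, for any pair $(i,j)$ with $b_{+i}, b_{-j} > 0$, divide $\ba_{+i}^T \bu_{1+} \le \lambda_+ b_{+i}$ and $\ba_{-j}^T \bu_{1-} \le \lambda_- b_{-j}$ by $b_{+i}$ and $b_{-j}$ respectively, add, and apply $\lambda_+ + \lambda_- \le 1$.

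For the reverse direction, suppose $\bu_1$ satisfies all three blocks of constraints. Define the tightest feasible scalings
$$
 \lambda_\pm^* := \max\Bigl(0,\; \max\{b_{\pm i}^{-1} \ba_{\pm i}^T \bu_{1\pm} : b_{\pm i} > 0\}\Bigr).
$$
The rows of $A_\pm \bu_{1\pm} \le b_\pm$ with $b_{\pm i} > 0$ give $\lambda_\pm^* \le 1$, and taking the row indices attaining each maximum in the cross block yields $\lambda_+^* + \lambda_-^* \le 1$. One then verifies $A_\pm \bu_{1\pm} \le \lambda_\pm^* b_\pm$ componentwise: rows with $b_{\pm i} > 0$ follow from the definition of $\lambda_\pm^*$, while rows with $b_{\pm i} = 0$ already enforce $\ba_{\pm i}^T \bu_{1\pm} \le 0 = \lambda_\pm^* b_{\pm i}$. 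When $\lambda_+^* > 0$ this places $\bu_{1+}/\lambda_+^*$ in $\co\{\mbasis_1(\mu_i)\}_{\mu_i \ge 0}$, which expands as $\sum_{\mu_i > 0} c_i^+ \mbasis_1(\mu_i)$ with $c_i^+ \ge 0$ and $\sum_i c_i^+ \le 1$ (the $\mu = 0$ vertex $\mbasis_1(0) = 0$ absorbing any slack); when $\lambda_+^* = 0$ the boundedness of the polytope, hence triviality of its recession cone, forces $\bu_{1+} = 0$. The negative side is symmetric. I would then synthesize a representing discrete density by placing mass $\lambda_+^* c_i^+$ at each node $\mu_i > 0$, mass $\lambda_-^* c_i^-$ at each $\mu_i < 0$, and the residual $1 - \lambda_+^* \sum_i c_i^+ - \lambda_-^* \sum_i c_i^- \ge 0$ at $\mu = 0$; nonnegativity of the residual is exactly what the cross constraint buys.

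The core of the argument is the elimination of the hidden scalings $\lambda_\pm$ in favor of inequalities purely on $\bu_1$: the half-interval blocks absorb the individual bound $\lambda_\pm \le 1$ (which is why $b_\pm \ge 0$ is assumed), while the cross block enforces the coupling $\lambda_+ + \lambda_- \le 1$. The main obstacle is purely bookkeeping around the two degeneracies the statement itself flags, namely the $\mu = 0$ quadrature node (contributing to $u_0$ but not to $\bu_1$) and the rows with $b_{\pm i} = 0$ (which remain valid on each half-space but cannot be normalized into the cross block); both are forced on us and present no real difficulty.
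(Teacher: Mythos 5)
The paper gives no proof to compare against: the appendix explicitly states that proofs in that section are omitted as ``unenlightening and relatively straightforward.'' Your argument is correct and is evidently the intended one --- you decompose a representing convex combination by the sign of the quadrature node, introduce the mass fractions $\lambda_\pm$ with $\lambda_+ + \lambda_- \le 1$ (the slack absorbed by the node at $\mu = 0$), and then eliminate $\lambda_\pm$ to obtain exactly the three blocks of inequalities, with the converse handled via the tight scalings $\lambda_\pm^*$, the trivial recession cone forcing $\bu_{1\pm} = 0$ when $\lambda_\pm^* = 0$, and the cross block guaranteeing a nonnegative residual mass at $\mu = 0$. The only point worth a half-line more care is the case where one of the inner maxima defining $\lambda_\pm^*$ is negative and gets clipped to zero: there $\lambda_+^* + \lambda_-^* \le 1$ follows from the single-block constraints rather than the cross block, a trivial case distinction that does not affect the argument.
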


If we let $C_\pm$ denote the number of rows of $A_\pm$ respectively,
this representation gives $C_+ + C_- + C_+C_-$ as an upper-bound on the number of
facets in $\RQmmOne$.%
\footnote{%
A consideration of the most basic case, MM$_1$, shows that indeed some of the
inequalities in this half-space representation are redundant, but at the moment we
are unable to say in general exactly how many are redundant.
}
The number of rows such that $b_{\pm i} = 0$ is equal to the number of facets
including the vertex corresponding to the quadrature point at $\mu = 0$.
These facets can be more generally described as those containing the vertex
corresponding to the first quadrature point, when the quadrature points are
arranged in increasing order.
The number of such facets can be computed using Gale's evenness condition (see
Theorem 13.6 and Exercise 13.1 in \cite{brondsted-convex-polytopes}).
We omit this computation here but note that removing these facets does not change
the order of the number of facets (nor any relevant leading-order coefficients), so
in the comparison that follows, we ignore these terms.

To compare the full-moment and mixed-moment cases for the same number of degrees of
freedom, one would consider the full-moment case
of order $N$, for $N$ even, and the mixed-moment case of order $N/2$.
Let us assume that we use a quadrature set which includes $\mu = 0$ and has $Q / 2$
points over both $\mu \ge 0$ and $\mu \le 0$, for a total of $Q - 1$ points (since
the point at $\mu = 0$ should not be counted twice in the full-moment case).
Then the number of facets in the full-moment case is $C(N, Q - 1)$ while in the
mixed-moment case, the number of facets in our half-space representation is on the 
order of $C(N / 2, Q / 2)^2$.
Then, straightforward calculations show that
when $N / 2$ is odd we have $C(N / 2, Q / 2)^2 = \cO(Q^{N / 2 - 1})$, which is one
order less than in the full-moment case.
When $N / 2$ is even, our half-space representation for the mixed-moment case has
$\cO(Q^{N / 2})$ facets, which is
the same order as the full-moment case.
However, the leading-order coefficient is smaller in the mixed-moment case,
thereby showing that the number of facets in the mixed-moment case is at
least asymptotically smaller.
Indeed, if we let $N = 4n$, the ratio of the highest-order coefficients is
$(2n)!/(2^n n!)^2$, which is
bounded by $1/2$ and monotonically decreases with $n$.

\longpaper{
\subsection{Jacobians of minimum-entropy fluxes}
\label{sec:Jacobians}
For full moments minimum-entropy one can easily see that the Jacobian is given 
by
\begin{align*}
\begin{pmatrix}
0&1&0&\cdots&0\\
0&0&1&\cdots&0\\
\vdots&&\ddots&\vdots\\
0&0&0&\cdots&1\\
&&\left(\nabla_\U u_{n+1}\right)^T &&
\end{pmatrix} &&\text{ with }&
\begin{pmatrix}
u_{0}&\cdots&u_{n}\\
\vdots & \ddots & \vdots\\
u_{n} & \cdots & u_{2n}
\end{pmatrix}\nabla_uu_{n+1} = \begin{pmatrix}
u_{n+1}\\\vdots\\u_{2n+1}\end{pmatrix}
\end{align*}
Similarly one can conclude the Jacobian for mixed moment minimum-entropy 
methods:
\begin{align*}
\begin{pmatrix}
0&1&1&0&\cdots&0\\
0&0&0&1&\cdots&0\\
\vdots&&\ddots&\vdots\\
0&0&0&0&\cdots&1\\
&&\left(\nabla_\U \psip{k}\right)^T &&\\
&&\left(\nabla_\U \psim{k}\right)^T &&
\end{pmatrix}
\end{align*}
with
\small
\begin{align*}
\begin{pmatrix}
u_{0}&\psip{1}&\psim{1}&\psip{2}&\psim{2}&\cdots&\psip{n}&\psim{n}\\
\psip{1}&\psip{2}& 0& \psip{3}&0&\cdots&\psip{n+1}&0\\ 
\psim{1}& 0&\psim{2}& 0& \psim{3}&\cdots& 0&\psim{n+1}\\
\vdots&&&&\vdots&&&\vdots\\
\psip{n}&\psip{n+1}& 0& \psip{n+2}&0&\cdots&\psip{2n}&0\\ 
\psim{n}& 0&\psim{n+1}& 0& \psim{n+2}&\cdots& 0&\psim{2n}
\end{pmatrix}\nabla_u \psip{k} = 
\begin{pmatrix}
\psip{k}\\\psip{k+1}\\0\\\vdots\\\psip{k+n}\\0
\end{pmatrix}\\
\begin{pmatrix}
u_{0}&\psip{1}&\psim{1}&\psip{2}&\psim{2}&\cdots&\psip{n}&\psim{n}\\
\psip{1}&\psip{2}& 0& \psip{3}&0&\cdots&\psip{n+1}&0\\ 
\psim{1}& 0&\psim{2}& 0& \psim{3}&\cdots& 0&\psim{n+1}\\
\vdots&&&&\vdots&&&\vdots\\
\psip{n}&\psip{n+1}& 0& \psip{n+2}&0&\cdots&\psip{2n}&0\\ 
\psim{n}& 0&\psim{n+1}& 0& \psim{n+2}&\cdots& 0&\psim{2n}
\end{pmatrix}\nabla_u \psim{k} = 
\begin{pmatrix}
\psim{k}\\0\\\psim{k+1}\\\vdots\\0\\\psim{k+n}
\end{pmatrix}
\end{align*}
}{}
\bibliographystyle{plain}
\bibliography{RadLit}

\end{document}